\newtheorem{theorem}{Theorem}[section]
\newtheorem{lemma}[theorem]{Lemma}
\newtheorem{definition}[theorem]{Definition}
\newtheorem{proposition}[theorem]{Proposition}
\newtheorem{remark}[theorem]{Remark}
\newtheorem{conjecture}[theorem]{Conjecture}
\numberwithin{equation}{section}
\newtheorem{example}[theorem]{Example}
\def \hfillx {\hspace*{-\textwidth} \hfill}
\begin{document}
	
	\title{On the rack homology of graphic quandles}
	
	\author{Sujoy Mukherjee}
	\address{Department of Mathematics, The George Washington University, USA.}
	\email{sujoymukherjee@gwu.edu}
	
	\author{J\'{o}zef H. Przytycki}
	\address{Department of Mathematics, The George Washington University, USA \and University of Gda\'{n}sk, Poland.}
	\email{przytyck@gwu.edu}
	
	\keywords{Alexander (affine) quandles; cocycle extension; cocycle invariant; entropic (medial) magma; graphic quandle; one term homology; quandle homology; rack; rack homology; shelves}
	
	\date{September 08, 2017, and in revised form, December 05, 2017.}
	
	\subjclass[2010]{Primary: 57M25. Secondary: 57M27,18G60.}
	
	\begin{abstract}
		
		{\bf This paper has partially a novel and partially a survey character.  We start with a short review of rack (two term) homology of self distributive algebraic structures (shelves) and their connections to knot theory. We concentrate on a sub-family of quandles satisfying the graphic axiom. For a large family of graphic quandles (including infinite ones), we compute the second rack homology groups. Finally, we propose conjectures based on our computational data.}
		
	\end{abstract}

	\maketitle
	
	%\tableofcontents
	
	Quandles are algebraic structures with axioms motivated by the Reidemeister moves from knot theory (\cite{Joy,Mat}). More general algebraic structures such as racks, spindles, and shelves are obtained by discarding some of the axioms of a quandle. A number of homology theories related to quandles have been developed in the last three decades with connections to knot theory. Rack (two term) homology was introduced in \cite{FRS1,FRS2,FRS3} and was later enhanced into quandle homology in \cite{CJKLS} to define quandle cocycle invariants for classical knots and links. While for rack homology it is enough to work with shelves, for quandle homology, spindles are necessary (see Section \ref{intro}).
	
	\begin{figure}[ht]
		\centering
		\includegraphics[scale=0.5]{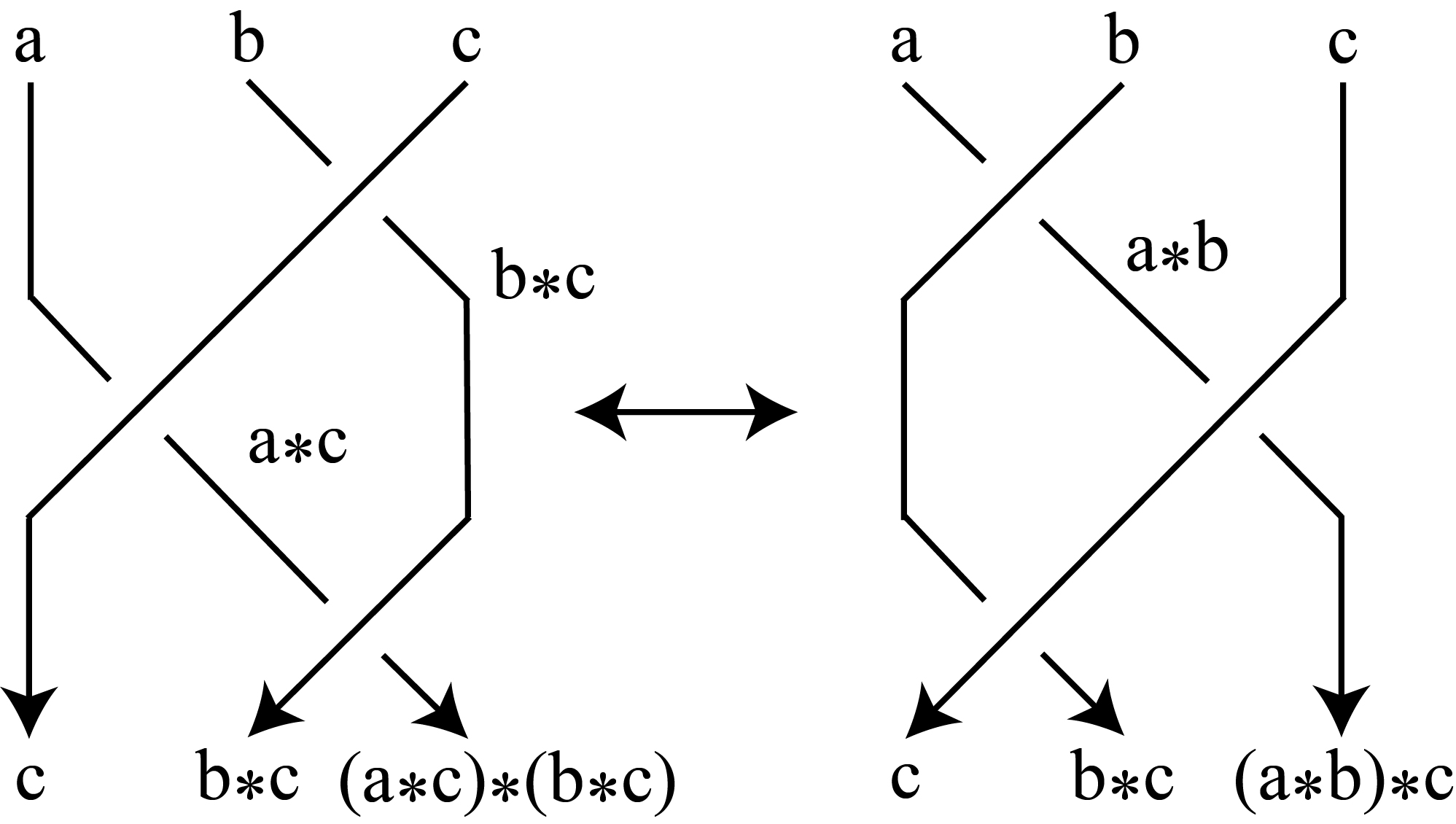}
		\caption{The third Reidemeister move and the self distributivity axiom}
	\end{figure}
	
	Rack and quandle homology theories have been studied extensively. The free part of rack and quandle homology for finite racks and quandles is completely determined in \cite{EG, LN}. The torsion part of rack and quandle homology have been studied in \cite{EG, LN, Nos2, NP1,NP2,NP3,PY}.
		
	The paper is organized as follows. In the first section, we introduce the basic notions related to self distributive algebraic structures. We also introduce the notion of graphic quandles and discuss our main example. In Section \ref{section 2}, after a brief outline of rack and quandle homology, we compute the torsion subgroups of the second rack homology groups of some of the graphic quandles from our main example. Additionally, we compute the second homology for some infinite graphic quandles. We describe quandle cocycle extensions of chosen graphic quandles. Finally, we finish with some open problems and computational data.
	
\section{Introduction} \label{intro}
	
	We start with basic definitions and examples. A {\color{blue}{\bf shelf}} or a self distributive algebraic structure\footnote{In this paper, we use the right self distributivity axiom for shelves, unless otherwise stated. The main reason for this convention is that knot theory is the origin of the notion of a quandle.} is a magma $(X,*)$ such that for all $a,b,c \in X,$ $$(a*b)*c = (a*c)*(b*c).$$ A {\color{blue}{\bf rack}} is a shelf such that there exists $\bar{*}:X \times X \longrightarrow X$ and for all $a,b \in X$, $$(a\ \bar{*} \  b)*b = a = (a*b) \ \bar{*} \ b.$$ A {\color{blue} {\bf quandle}} is an idempotent rack, that is, $a*a = a,$ for all $a \in X.$ A {\color{blue} {\bf spindle}} is an idempotent shelf.
	
	Homomorphisms and isomorphisms between shelves are defined in the usual way. Let $\mathcal{O}$ denote the set of orbits in a shelf with respect to right multiplication. If $|\mathcal{O}| = 1,$ then the shelf is said to be connected. Following are some examples of the algebraic structures defined above.
	
	\begin{example}
		\
		\begin{enumerate}
			\item{Let $(G,\cdot)$ be a group. Then $(G,*)$ is a quandle with $a*b = b^{-n} \cdot a \cdot b^n,$ for all $a,b \in G.$ When $n=1,$ these quandles are known as {\color{blue}{\bf conjugation quandles}}.}
			\item{Let $M$ be a module over the ring $\mathbb{Z}[t^{\pm 1}].$ It forms a quandle with $a*b = ta + (1-t)b,$ for all $a,b \in M.$ These quandles are called {\color{blue}{\bf Alexander}} or {\color{blue} {\bf affine quandles}}.}
			\item {Let Let $(G,\cdot)$ be a group. Then $(G,*)$ is a quandle with $a*b = b \cdot a^{-1} \cdot b,$ for all $a,b \in G.$ It is called the {\color{blue}{\bf core quandle}} of the group $G$. When $G$ is an abelian group it is called a {\color{blue}{\bf Takasaki quandle}} while if $G$ is $\mathbb{Z}_n,$ it is called a {\color{blue}{\bf dihedral quandle}}. When $G$ is Abelian, we write $a*b = 2b - a,$ for $a,b \in G.$ }
		\end{enumerate}
	\end{example}
	
	In the past, several sub-families of racks and quandles have received extensive attention due to their extra structure. Here are a few of them.
	
	\begin{example}
		\
		\begin{enumerate}
			\item{A {\color{blue}{\bf kei}} (introduced by M. Takasaki in 1942) or an involutory quandle, in addition to the first and third axioms of a quandle satisfies a stronger variant of the second axiom: $* = \bar{*},$ that is, for all $a,b \in X,$ $(a*b)*b = a.$}
			\item{Let $(X,*)$ be a quandle. If for all $a,b \in X,$ the equation $a*x = b$ has a unique solution, then $(X,*)$ is called a {\color{blue} {\bf quasigroup quandle}}.}
			\item {A quandle $(X,*)$ is said to be {\color{blue} {\bf entropic}} or medial, if for all $a,b,c,d \in X,$ $(a*b)*(c*d) = (a*c)*(b*d).$}
	\end{enumerate}
	\end{example}

	We continue with our main example of a sub-family of quandles in the next subsection.
	
	\subsection{Graphic quandles}\label{1.1}
	
	The graphic axiom $a*b = (a*b)*a,$ was introduced by F. W. Lawvere in 1987 when studying graphic monoids \cite{Law}. Graphic monoids are identical to unital {\it left} shelves.\footnote{In \cite{CMP}, since the {\it right} self distributive axiom was used, the left version ($a*b = b*(a*b)$) of the graphic axiom often appears. The left graphic axiom does not behave well in racks. Fortunately, this is not the case with the right version of the graphic axiom!}
	
	\begin{definition}
		Let $(X,*)$ be a quandle. If for all $a,b \in X,$ $a*b = (a*b)*a,$ then we call $(X,*)$ a {\color{blue} {\bf graphic quandle}}.
	\end{definition}

	\begin{table}[h]\label{a}
		
		\centering
		\caption{Graphic quandles versus quandles}
		\begin{tabular}{|c|c|c|c|c|c|c|}
			\hline
			$n$&1&2&3&4&5&6\\
			\hline
			\# graphic quandles of size $n$&1&1&2&5&15&56\\
			\hline
			\# quandles of size $n$&1&1&3&7&22&73\\
			\hline
		\end{tabular}
	\end{table}
	
	The additional axiom in the above definition is called the graphic axiom. Graphic racks and graphic shelves are defined analogously. The graphic axiom is satisfied by many finite quandles (Table \ref{a}). Following is one of the ways to construct a large family of graphic magmas, in particular graphic quandles.
	
	\begin{example}\label{graphic_quandle_example}
		Let $\{X_i\}_{i \in \Lambda}$ be non-empty sets, $ X = \bigsqcup_{\{i \in \Lambda\}}X_i,$ and $f_{i,j} : X_i \longrightarrow X_i$ for all $i,j \in \Lambda$. Further let $*:X \times X \longrightarrow X$ be defined as follows. For $x_i \in X_i$ and $x_j \in X_j$, $x_i*x_j = f_{i,j}(x_i)$.
		
		\begin{enumerate}
			\item{If $f_{i,j}f_{i,k} = f_{i,k}f_{i,j}$ for all $i,j \in \Lambda$, then $(X,*)$ is a shelf. In other words, for fixed $i$, the functions $f_{i,j}$ commute.}
			\item{$(X,*)$ is a rack if in addition to being a shelf, all the functions $f_{i,j}$ for all $i,j \in \Lambda$ are bijections.}
			\item{For the idempotency axiom, we need $f_{i,i} = Id$.\footnote{$f_{i,i} = Id$ also forces the magma to be left self-distributive. Therefore, quandles constructed in this way are idempotent left self distributive magmas as well.}}
			\item{For the graphic axiom to be satisfied we need $f_{i,i} = Id$ as well.}
			\item {The requirement for the entropic axiom is same as that for self-distributivity. Therefore, all shelves obtained by this construction are entropic.\footnote{In \cite{JPSZ}, it is proven that a quandle is entropic if and only if it the homomorphic image of a quasi-affine quandle.}}
			\item{For the associativity axiom, the condition necessary is $f_{i,j}=f_{i,k}f_{i,j},$ for $i,j,k \in \Lambda.$ One way to ensure this is as follows: For $i \in \Lambda,$ let $f_{i,j} = f_{i,k},$ for all $j,k \in \Lambda$ with all $f_{i,j}$(s) as idempotent maps.}
			
		\end{enumerate}
		
		   Therefore, to obtain a quandle, we need a rack with all the functions $f_{i,i}$ to be the identity map.
		    However, to obtain a spindle it is enough for the shelf $(X,*)$ to have $f_{i,i} = Id$. Associative shelves are obtained when all the maps are idempotent, they commute pairwise and for given $i \in \Lambda,$ $f_{i,j} = f_{i,k},$ for all $j,k \in \Lambda.$ See \cite{CMP,Muk} for a more detailed treatment of associative shelves and their behavior in the self-distributive category.
	\end{example}

	\begin{table}[h]
		\caption{Graphic quandles with two (on the left) and three orbits.}
		\label{examples of GQ}
		\begin{minipage}{0.5\textwidth}
			\centering
			\begin{tabular}{c || c c c | c c c}
				$*$&0&1&2&3&4&5 \\
				\hline \hline
				0&0&0&0&1&1&1\\
				1&1&1&1&2&2&2\\
				2&2&2&2&0&0&0\\
				\hline
				3&4&4&4&3&3&3\\	
				4&5&5&5&4&4&4\\
				5&3&3&3&5&5&5\\
			\end{tabular}
		\end{minipage}
		\hfillx
		\begin{minipage}{0.5\textwidth}
			\centering
			\begin{tabular}{c || c c | c | c c c}
				$*$&0&1&2&3&4&5 \\
				\hline \hline
				0&0&0&0&1&1&1\\
				1&1&1&1&0&0&0\\
				\hline
				2&2&2&2&2&2&2\\
				\hline
				3&4&4&4&3&3&3\\	
				4&5&5&5&4&4&4\\
				5&3&3&3&5&5&5\\
			\end{tabular}
		\end{minipage}
	\end{table}

	Table \ref{examples of GQ} shows two examples of graphic quandles constructed using the previous construction. The one on the left has two orbits. The one on the right has three orbits. Many among the finite graphic quandles can be constructed by this method. For some of these graphic quandles, we introduce special notation to make it convenient later in the paper. Let $GQ(o_1 \ | \ o_2 \ | \ \cdots \ | \ o_k)$ denote the graphic quandle with $k$ orbits $O_1,O_2,...,O_k$ of size $o_1,o_2,...,o_k$ respectively. We assume also that functions $f_{i,j} : O_i \longrightarrow O_i$ are $o_i$-cycles (not depending on $j$ and denoted by $f_i$) from the permutation group $S_{o_i}$ for all $0 \leq i \neq j \leq k.$ Observe that for $GQ(o_1 \ | \ o_2 \ | \ \cdots \ | \ o_k),$ $X_i = O_i,$ that is, the sets $X_i$ are equal to the orbits $O_i.$

	In the following remark, we compare the construction of the previous example with the already known sub-families of quandles.

	\begin{remark}
		Let $(X,*)$ be a graphic quandle constructed as in Example \ref{graphic_quandle_example}. Further, let $a \in X_i$ and $b \in X_j.$
		\begin{enumerate}
			\item{$(( \cdots (a*b)*b) \cdots *b) = a*^nb = f_{i,j}^n(a).$ Therefore, if $f_{i,j}$ has order $n,$ $(X,*)$ is an $n$-quandle. In particular, when $n=2,$ then $(X,*)$ is a kei.}
			\item{In an Alexander quandle defined for a module $M$ over $\mathbb{Z}[t^{\pm 1}],$ $(a*b)*a = (ta + (1-t)b)*a = t^2a + t(1-t)b + (1-t)a = (t^2 - t + 1)a + (t-t^2)b.$ Therefore, for the graphic axiom to hold, we need $ (t^2 - t + 1)a = ta,$ that is, $(t-1)^2\cdot M = 0.$ In particular, the Alexander quandle $\mathbb{Z}[t]/<(t-1)^2>$ is graphic. There are two interesting sub-cases:
				\begin{enumerate}
					\item{ Let $k \neq 0.$ Consider the module: $\mathbb{Z}[t]/<(t-1)^2, t - (k+1)>.$ We obtain in this case $Z_{k^2}$ with $i*j= (k+1)i -kj$.When $k = \pm 2,$ we obtain $R_4,$ the dihedral quandle of four elements.}
					\item {Consider the module: $\mathbb{Z}[t]/<3,(t-1)^2> = \mathbb{Z}_3[t]/<t^2 + t + 1>.$ This quandle is related to the 3-fold branched cover of $S^3$ branched along a link.}
				\end{enumerate}}
			\item{Quasigroup quandles are connected, but the quandles constructed in \ref{graphic_quandle_example} are not connected for $|X| > 1$.}
			\end{enumerate}
	\end{remark}

	Distributive sets of binary operations were defined in \cite{Deh, Prz, RS}. The graphic shelves in Example \ref{graphic_quandle_example} can be generalized to graphic multishelves. 
	
	\begin{proposition} Let $(X,*_f)$ and $(X,*_g))$ be shelves from Example \ref{graphic_quandle_example}. Then they form a distributive set if and only if functions $f_{i,j}$ and $g_{i,k}$ commute, that is, $f_{i,j}g_{i,k}=g_{i,k}f_{i,j},$ for all $i,j,k.$
	\end{proposition}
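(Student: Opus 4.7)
The plan is to unwind what it means for the pair $\{*_f,*_g\}$ to be a distributive set and show that on the nose it reduces to the stated commuting condition on the families $\{f_{i,j}\}$ and $\{g_{i,k}\}$. Recall that two binary operations $*_f,*_g$ on $X$ form a distributive set when each distributes over the other, that is, for all $a,b,c\in X$,
\[
(a*_f b)*_g c = (a*_g c)*_f(b*_g c), \qquad (a*_g b)*_f c = (a*_f c)*_g(b*_f c).
\]
When $*_f=*_g$ this specializes to the usual right self-distributivity, which by hypothesis already holds inside each shelf.

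I would then pick arbitrary elements according to the orbit decomposition: let $a\in X_i$, $b\in X_j$, $c\in X_k$. Using the defining rule $x_i*_f x_j = f_{i,j}(x_i)$ (and likewise for $*_g$), the left-hand side of the first identity reads
\[
(a*_f b)*_g c = f_{i,j}(a)*_g c = g_{i,k}\bigl(f_{i,j}(a)\bigr),
\]
since $f_{i,j}(a)\in X_i$. For the right-hand side, note that $g_{i,k}(a)\in X_i$ and $g_{j,k}(b)\in X_j$, so
\[
(a*_g c)*_f(b*_g c) = g_{i,k}(a)*_f g_{j,k}(b) = f_{i,j}\bigl(g_{i,k}(a)\bigr),
\]
where only the orbit type of the right-hand argument matters. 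The first distributive identity therefore holds for all $a\in X_i$ if and only if $g_{i,k}\circ f_{i,j}=f_{i,j}\circ g_{i,k}$ on $X_i$. Running the same computation for the second identity yields $f_{i,k}\circ g_{i,j}=g_{i,j}\circ f_{i,k}$, which is the same family of commutation relations after renaming the indices $j\leftrightarrow k$. So both halves of distributivity collapse into the single condition $f_{i,j}g_{i,k}=g_{i,k}f_{i,j}$ for all $i,j,k\in\Lambda$.

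Conversely, if the functions commute in this way, reversing the computation shows that both mixed distributivity identities hold for every choice of $a,b,c$ in $X$, so $\{*_f,*_g\}$ is indeed a distributive set. The argument is essentially a direct calculation; the only point requiring care is bookkeeping of which orbit each intermediate element lives in, so that the correct $f_{i,j}$ or $g_{i,k}$ is applied. I do not foresee a genuine obstacle, since the structure of the operations from Example \ref{graphic_quandle_example} makes every iterated product depend only on the leftmost element and on the orbit labels of the elements to its right.
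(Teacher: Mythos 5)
Your proof is correct and follows essentially the same direct computation as the paper: take $a\in X_i$, $b\in X_j$, $c\in X_k$ and evaluate both sides of the mixed distributivity identity to get $g_{i,k}f_{i,j}(a)$ versus $f_{i,j}g_{i,k}(a)$. The only cosmetic difference is that you verify the second mixed identity explicitly and observe it gives the same relations under $j\leftrightarrow k$, whereas the paper cites \cite{Prz} for the fact that checking one identity suffices.
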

	\begin{proof}
		It suffices to check self distributivity: $(a*_fb)*_gc=(a*_gc)*_f(b*_gc)$ (\cite{Prz}). Let $a\in X_i,b\in X_j$ and $c\in X_k$, then we have:
		$$(a*_fb)*_gc= f_{i,j}(a)*_gc= g_{i,k}f_{i,j}(a),$$
		$$(a*_gc)*_f(b*_gc)= g_{i,k}(a)*_fg_{j,k}(b)= f_{i,j}g_{i,k}(a).$$
	\end{proof}

	The importance of forming distributive sets is the possibility of defining multiterm distributive homology by forming linear combinations of $\partial_n^{(*)}$ for operations in a distributive set \cite{Prz}. See also \cite{CPP}.

	Observe that in a spindle (i.e. an idempotent shelf), $(a*b)*a = (a*a)*(b*a) = a*(b*a).$ Therefore, in a {\bf graphic spindle}, for any arbitrary pair of elements $a*(b*a) = (a*b)*a = a*b.$ Further, in a graphic quandle $((a*b)*c)*a = ((a*b)*a)*(c*a) =(a*b)*(c*a).$\footnote{If a quasigroup satisfies the equality $((a*b)*c)*a = (a*b)*(c*a),$ then it is a group (\cite{PV}).}

	\begin{proposition}\label{smallp}
		\
		\begin{enumerate}
			\item {Let $(X,*,\bar{*})$ be a graphic magma satisfying for $a,b \in X,$ $(a*b)\ \bar{*} \ b = a = (a\ \bar{*} \ b)*b$ . Then, for $a \in X$, $a = a*a.$ Therefore, all such $(X,*, \ \bar{*})$ which are shelves are graphic quandles.}
			\item {Graphic spindles do not contain any non-trivial quasigroup subspindles.}
		\end{enumerate}
	\end{proposition}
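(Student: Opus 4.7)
The plan for part (1) is to exploit the interaction between the graphic axiom and the invertibility of right multiplication. Starting from $a*b = (a*b)*a$ and applying $\bar{*}\, a$ to both sides, the right-hand side becomes $((a*b)*a)\,\bar{*}\, a$, which collapses to $a*b$ by the rack-style axiom $(y*a)\,\bar{*}\, a = y$ applied with $y = a*b$. This gives the intermediate identity $(a*b)\,\bar{*}\, a = a*b$ for all $a,b \in X$. Specializing $b = a$ yields $(a*a)\,\bar{*}\, a = a*a$, while the rack-style axiom applied directly gives $(a*a)\,\bar{*}\, a = a$. Comparing these two expressions forces $a*a = a$, so $(X,*)$ is idempotent. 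The closing sentence of part (1) is then immediate: if in addition $(X,*)$ is a shelf, it is an idempotent rack, hence a quandle, and it is graphic by assumption.

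For part (2), let $Y$ be a quasigroup subspindle of a graphic spindle $X$. The plan is to use solvability of $a*x = y$ inside $Y$ to rewrite an arbitrary element as a product $a*x$, then feed this through the graphic axiom. Fixing $a, y \in Y$, the quasigroup property yields $x \in Y$ with $a*x = y$. The graphic axiom then gives
\[
y*a \;=\; (a*x)*a \;=\; a*x \;=\; y,
\]
so right multiplication by any $a \in Y$ is the identity on $Y$. Consequently the equation $a*x = b$ reduces to $a = b$, and if $|Y| > 1$ choosing $a \neq b$ produces an equation with no solution, contradicting the quasigroup hypothesis. Therefore $Y$ must be trivial.

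I do not anticipate a deeper obstacle in either part. The only mildly non-obvious step is deciding to apply $\bar{*}\, a$ to the graphic axiom in part (1); once written down, every subsequent step is forced. Similarly, part (2) hinges on recognizing that the graphic axiom has teeth only when an element can be expressed as $a*x$, which is exactly what the solvability half of the quasigroup axiom supplies.
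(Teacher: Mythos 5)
Your proposal is correct in both parts. Part (1) is essentially the paper's own argument: both proofs apply $\bar{*}\,a$ to the graphic identity and use the cancellation $(y*a)\,\bar{*}\,a=y$ in two ways; whether you specialize $b=a$ before or after cancelling is immaterial. Part (2), however, takes a genuinely different route. The paper first records the spindle identity $(a*b)*a=(a*a)*(b*a)=a*(b*a)$ (which uses idempotency and self-distributivity), combines it with the graphic axiom to get $a*b=a*(b*a)$, cancels on the left to obtain $b=b*a$, and then compares with $b=b*b$ to force $a=b$. You instead use only the solvability of $a*x=y$ in $Y$ (surjectivity of left translation) together with the bare graphic identity $(a*x)*a=a*x$ to conclude that every right translation of $Y$ is the identity, hence $a*x=a$ for all $a,x\in Y$, making $a*x=b$ unsolvable whenever $a\ne b$. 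Your version is marginally more general, since it never invokes idempotency or self-distributivity and therefore rules out nontrivial sub-quasigroups in any graphic magma, not just graphic spindles; the paper's version buys a very short cancellation computation once the spindle identity stated just before the proposition is available. Both arguments are valid.
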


	\begin{proof}\
		\begin{enumerate}
			\item {Let $a \in X.$ Then, $a*a = (a*a)*a$ by the graphic axiom, so that $(a*a)\ \bar{*} \ a = ((a*a)*a) \ \bar{*} \ a,$ which gives $a = a*a.$}
			\item {Let $Y \subset X$ be a quasigroup subquandle and $a \neq b \in Y.$ Then, $a*b = (a*b)*a \implies a*b = a*(b*a) \implies b = b*a,$ as $Y$ is a quasigroup. But $b = b*b \implies b*b = b*a \implies b = a,$ as $Y$ is a quasigroup which is a contradiction.}
		\end{enumerate}
	\end{proof}

	By Proposition \ref{smallp}, graphic quandles cannot contain a subquandle which is a quasigroup.  In fact, almost all finite quandles up to order six ($100$ out of the $107$) are either graphic quandles or contain a non-trivial quasigroup. Further, all graphic quandles up to order six can be constructed using Example \ref{graphic_quandle_example}. 
	
	The sets $X_i$ in Example \ref{graphic_quandle_example} can be infinite as in the following example.
	
	\begin{example}
		Let $f: \mathbb{Z} \longrightarrow \mathbb{Z}$ be given by $f(x) = x+1,$ for all $x \in \mathbb{Z}.$ Further, let $X = \sqcup_{\alpha \in \Lambda} \{X_\alpha\}$ with $X_{\alpha} = \mathbb{Z}$ and $f_{\alpha,\beta}:X_{\alpha} \longrightarrow X_{\alpha}$ given by $f_{\alpha,\beta} = f,$ for all $\alpha, \beta \in \Lambda$ and $\alpha \neq \beta.$
	\end{example}
	
\section{Rack and quandle homology}\label{section 2}

	In this section, we start by recalling the definitions of rack and quandle homology. The first ideas of rack homology dates back to April 2, 1990 in a letter written by R. Fenn to C. Rourke \cite{FR}. For the history of quandle homology, see \cite{Car}. Let $C_n^R = \mathbb{Z}X^n$ for a shelf $(X,*)$ and $n>0.$ Further, let $C_0^R = 0.$ Let $\partial_n:C_n \longrightarrow C_{n-1}$ with $\partial_n$ given as follows for $(x_1,x_2,...x_n) \in X^n$: {\small $$\partial_n(x_1,x_2,...,x_n) = \sum_{i=2}^{n}(-1)^i\{(x_1,x_2,...,x_{i-1},x_{i+1},x_{i+2},...,x_n)-(x_1*x_i,x_2*x_i,...,x_{i-1}*x_i,x_{i+1},x_{i+2},...,x_n)\}.$$}
	Then, $\partial_n \cdot \partial_{n+1} = 0,$ so that the $n^{th}$ {\color{blue}rack homology group} is given by: $$H_n^R(X) =\frac{ker(\partial_{n})}{im(\partial_{n+1})}.$$
	
	Let $C_n^D$ be the subset of $C_n^R$ generated by $n$-tuples $(x_1,x_2,...,x_n)$ with $x_i=x_{i+1}$ for some $i \in \{1,...,n-1\}.$ If $X$ is a quandle, then $C_n^D$ is a sub-complex of $C_n^R.$ Let $C_n^Q:=C_n^R/C_n^D$ and $\partial_{n}^Q:=\partial_{n}^R$ with the induced homomorphism. Then, the $n^{th}$ {\color{blue}quandle homology group} is given by: $$H_n^Q(X) =\frac{ker(\partial_{n})}{im(\partial_{n+1})}.$$
	
	Following is a short survey of main results proven in rack homology.
	
	\begin{theorem}[Litherland-Nelson, \cite{LN}]
		For a quandle $(X,*)$, the long exact sequence of quandle homology: $$\longrightarrow H_{n+1}^Q(X) \longrightarrow H_n^D(X) \longrightarrow H_n^R(X) \longrightarrow H_n^Q(X) \longrightarrow H_{n-1}^D(X) \longrightarrow$$ splits into short exact sequences: $$0 \longrightarrow H_n^D(X) \longrightarrow H_n^R(X) \longrightarrow H_n^Q(X) \longrightarrow 0.$$ In particular, if $H_n^D(X)$ denotes the $n^{th}$ degenerate homology group, then $$H_n^R(X) = H_n^Q(X) \oplus H_n^D(X).$$
	\end{theorem}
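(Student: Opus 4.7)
The plan is to promote the given short exact sequence of chain complexes
\[0 \longrightarrow C_*^D \longrightarrow C_*^R \longrightarrow C_*^Q \longrightarrow 0\]
to a direct-sum decomposition at the chain level; once such a splitting is in place, the connecting homomorphism in the associated long exact sequence in homology is automatically zero and the long exact sequence breaks into the short exact sequences displayed in the theorem. That the above sequence of chain complexes is actually exact is the statement, recalled in the text just before the theorem, that $C_*^D$ is a subcomplex of $C_*^R$ when $X$ is a quandle; the verification uses idempotency $a*a=a$ to see that, in $\partial_n$ applied to a tuple with $x_i = x_{i+1}$, the two terms that delete one of the equal entries cancel while the remaining terms preserve the equality.

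To produce the splitting I would construct a chain-map retraction $r_*\colon C_*^R \to C_*^D$ restricting to the identity on $C_*^D$. The guiding idea is to filter $C_n^R$ by the number of consecutive equal pairs in a basis tuple and to define $r_n$ inductively across the filtration, using $*$-twisted degeneracy operators of the form
\[s_i(x_1,\ldots,x_{n-1}) = (x_1*x_i,\ldots,x_{i-1}*x_i,\,x_i,\,x_i,\,x_{i+1},\ldots,x_{n-1})\]
to lift non-degenerate tuples into $C_n^D$ by forcing the insertion of a repeated entry while twisting the entries to its left appropriately. By construction $r_n$ is the identity on $C_n^D$ and lands in $C_n^D$.

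The main step, and the principal obstacle, is verifying that $r_*$ commutes with the rack boundary $\partial_n$. This is a lengthy case analysis organized by the position of the deletion in $\partial_n$: one tracks how the $*$-twists generated by $\partial_n$ on the left of the deletion interact with the $*$-twists introduced by $s_i$. Both quandle axioms are used essentially: idempotency $a*a=a$ forces the pair of terms coming from deletion at positions $i$ and $i+1$ (when $x_i = x_{i+1}$) to become the same tuple up to sign and hence cancel; self-distributivity $(a*b)*c = (a*c)*(b*c)$ is what allows the $*$-twists to be reshuffled so that tuples on opposite sides of the identity $r_*\partial = \partial r_*$ literally coincide. Organizing this bookkeeping by the number of repeated pairs, and treating the top and bottom of the filtration as base and inductive step, should keep the computation tractable.

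Once $r_*$ is known to be a chain map, the direct sum decomposition $C_*^R = C_*^D \oplus \ker(r_*)$ of chain complexes is immediate, and the projection $\pi\colon C_*^R \to C_*^Q$ restricts on $\ker(r_*)$ to a chain isomorphism onto $C_*^Q$. Passing to homology yields $H_n^R(X) \cong H_n^D(X) \oplus H_n^Q(X)$ and, simultaneously, that every connecting homomorphism $H_{n+1}^Q(X) \to H_n^D(X)$ in the long exact sequence vanishes, producing the claimed decomposition of the long exact sequence into short exact sequences.
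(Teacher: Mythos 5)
The paper itself offers no proof of this statement: it is quoted from Litherland--Nelson \cite{LN} as part of the survey portion of Section \ref{section 2}, so there is no in-paper argument to compare yours against line by line. Your overall strategy --- split the short exact sequence of chain complexes $0 \to C_*^D \to C_*^R \to C_*^Q \to 0$ at the chain level, so that the vanishing of the connecting homomorphisms and the direct-sum decomposition of homology follow simultaneously --- is the right one, and it matches the shape of the argument in \cite{LN}. It is in fact the only reasonable route to the final claim: merely knowing that the long exact sequence breaks into short exact sequences $0 \to H_n^D \to H_n^R \to H_n^Q \to 0$ would not give $H_n^R \cong H_n^D \oplus H_n^Q$, since a short exact sequence of abelian groups need not split. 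Your verification that $C_*^D$ is a subcomplex (the two face terms at a repeated pair cancel, using idempotency) is correct.

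The gap is that the retraction $r_*$, which carries the entire content of the theorem, is never actually constructed. The operators $s_i$ you write down raise degree by one (they map $C_{n-1}$ to $C_n$), so they cannot by themselves define a degree-zero map $r_n : C_n^R \to C_n^D$; you would have to precompose with face maps or assemble them into a chain homotopy, and you do not say how, nor how the inductive definition over your filtration is seeded on tuples with no repeated pair. Consequently the claims that ``by construction $r_n$ is the identity on $C_n^D$'' and that $r_*\partial = \partial r_*$ are assertions about a map that has not been specified, and the commutation with $\partial$ --- which you yourself flag as the principal obstacle --- is exactly where all the difficulty lives: naive candidates fail, and the published arguments (the explicit splitting maps of \cite{LN}, and the finer decomposition of the degenerate complex in \cite{PP}) each hinge on a carefully chosen formula together with a genuine computation using both quandle axioms. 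As written, this is a plausible plan of attack rather than a proof; to complete it you must exhibit $r_n$ explicitly and carry out the case analysis you defer.
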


	Furthermore, Litherland and Nelson gave an explicit formula when $n = 2,$ and $n=3.$
	
	\begin{theorem}[Litherland-Nelson, \cite{LN}]
		 For any quandle $(X,*)$, we have:
		$$ H_2^R(X) = H_2^Q(X) \oplus \mathbb{Z}\mathcal{O}, \ and$$
		$$ H_3^R(X) = H_3^Q(X) \oplus H_2^Q(X) \oplus \mathbb{Z} \mathcal{O}^2.$$
	\end{theorem}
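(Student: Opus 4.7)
My plan is to exploit the splitting $H_n^R(X) \cong H_n^Q(X) \oplus H_n^D(X)$ from the preceding theorem, which reduces both assertions to identifying the degenerate homology groups $H_2^D(X)$ and $H_3^D(X)$, since these summands must equal $\mathbb{Z}\mathcal{O}$ and $H_2^Q(X)\oplus \mathbb{Z}\mathcal{O}^2$ respectively.

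For $H_2^D(X)$, a direct calculation should suffice. The chain group $C_2^D$ is free abelian on $\{(x,x) : x \in X\}$, and $\partial_2(x,x) = (x) - (x*x) = 0$ by idempotency, so every generator is a cycle. Computing $\partial_3$ on the two families of generators $(x,x,y)$ and $(x,y,y)$ of $C_3^D$ yields
\[\partial_3(x,x,y) = (x*y,\, x*y) - (x,x), \qquad \partial_3(x,y,y) = 0,\]
so $H_2^D(X)$ is $\mathbb{Z}X$ modulo the relations $x \sim x*y$ for all $y$. These are exactly the orbit relations, giving $H_2^D(X) \cong \mathbb{Z}\mathcal{O}$.

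For $H_3^D(X)$ I would proceed by filtering the degenerate complex according to where the repetition occurs. Let $A_\ast \subset C_\ast^D$ be the subcomplex generated by tuples with $x_1 = x_2$ and $B_\ast \subset C_\ast^D$ the subcomplex generated by tuples with $x_i = x_{i+1}$ for some $i \ge 2$, so that $C_\ast^D = A_\ast + B_\ast$ and $A_\ast \cap B_\ast$ consists of tuples with two separate coincident adjacent pairs. Running the Mayer--Vietoris long exact sequence in low degrees, I would identify each of $H_\ast(A)$, $H_\ast(B)$, and $H_\ast(A\cap B)$ via the shift maps $(x,x,x_3,\ldots) \mapsto (x_3,\ldots)$ and its analogues at other positions; I expect these to furnish chain homotopy equivalences with shifted versions of the rack and quandle complexes. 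The two copies of $\mathbb{Z}\mathcal{O}$ should then appear from the two available adjacent-degeneracy positions (each contributing a copy in the same way as the $n=2$ analysis above), while the summand $H_2^Q(X)$ should arise from the identification of one of the shifted subcomplexes with the quandle complex of $X$ in degree $2$.

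The main obstacle I anticipate is cleanly extracting the $H_2^Q(X)$ summand from the Mayer--Vietoris sequence: the two degeneracy positions interact nontrivially with nonzero connecting homomorphisms, and signs must be tracked carefully. Concretely, one has to construct an explicit chain-level map $C_2^Q(X) \to Z_3(C_\ast^D)$ (\textit{degenerating} a quandle $2$-cycle by inserting a repeated coordinate), verify that it descends to homology, show it is injective, and identify the cokernel as $\mathbb{Z}\mathcal{O}^2$. Injectivity is the delicate point, since it amounts to showing that no genuine quandle $2$-cycle becomes a boundary after degeneration; this is expected to follow by exhibiting a retraction built from the same shift maps used above.
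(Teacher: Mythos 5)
The paper offers no proof of this statement: it is quoted verbatim from Litherland and Nelson \cite{LN}, so there is nothing in the text to compare your argument against, and your proposal must stand on its own. Its two halves have very different status. The reduction to computing $H_2^D$ and $H_3^D$ via the splitting $H_n^R \cong H_n^Q \oplus H_n^D$ of the preceding theorem is legitimate, and your computation of $H_2^D$ is complete and correct: every generator $(x,x)$ of $C_2^D$ is a cycle by idempotency, $\partial_3(x,y,y)=0$, and $\partial_3(x,x,y)=(x*y,x*y)-(x,x)$, so $H_2^D \cong \mathbb{Z}X/\langle x \sim x*y\rangle \cong \mathbb{Z}\mathcal{O}$. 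This proves the first formula, and it is exactly the observation the authors themselves make in Remark 2.9 when they record $H_2^D(X)=\mathbb{Z}\mathcal{O}$.

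The second formula is where the genuine gap lies. Everything you say about $H_3^D$ is conditional (``I would identify'', ``I expect'', ``should then appear''), and the step you defer is precisely the mathematical content of the claim. The assertion that the shift maps $(x,x,x_3,\dots)\mapsto(x_3,\dots)$ furnish chain homotopy equivalences of $A_*$, $B_*$ and $A_*\cap B_*$ with shifted rack and quandle complexes is not formal. The shift is a chain map $A_*\to C_{*-2}^R(X)$ only because of nontrivial cancellations (in $\partial_4(x,x,y,z)$ the $i=2$ terms cancel by idempotency), it is far from injective, and computing its effect on homology --- in particular showing $H_3(A)\cong \mathbb{Z}\mathcal{O}\otimes H_1^R(X)\cong \mathbb{Z}[\mathcal{O}\times\mathcal{O}]$, identifying $H_3(B)$ and $H_3(A\cap B)$, and then tracking the Mayer--Vietoris connecting maps so that an $H_2^Q(X)$ summand survives --- is essentially the content of the Przytycki--Putyra theorem that ``the degenerate distributive complex is degenerate'' \cite{PP}, or of the explicit chain homotopies in \cite{LN}. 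You have correctly located the delicate points (injectivity of the degeneration map $C_2^Q(X)\to Z_3(C_*^D)$, the nonzero connecting homomorphisms), but locating them is not resolving them; as written, $H_3^D \cong H_2^Q(X)\oplus\mathbb{Z}\mathcal{O}^2$ is announced rather than proved, so the second identity of the theorem is not established.
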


	In \cite{PP}, this formula is generalized to a general K\"unneth type formula allowing computation of degenerate homology $H_n^D(X)$ (hence also the rack homology $H_n^R(X)$)
	from quandle homology $H_k^Q(X),$ for $k\leq n.$

	\begin{theorem}[Przytycki-Putyra, \cite{PP}]
		
		For any quandle $(X,*)$, we have:
		$$ H_n^R(X) =  H_{n}^Q(X) \oplus H_{n-1}^Q(X) \oplus \bigoplus_{p+q=n-1;p,q\geq 1} H^R_p(X)\otimes H^Q_q(X) \oplus \bigoplus_{p+q=n-2;p,q\geq 2} Tor(H_p^R(X),H_q^Q(X)).$$
		
	\end{theorem}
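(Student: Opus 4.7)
The plan is to reduce the statement to a chain-level quasi-isomorphism and then apply the classical algebraic Künneth theorem. By the Litherland--Nelson splitting cited above, the short exact sequence $0 \to C^D_* \to C^R_* \to C^Q_* \to 0$ splits in homology, so $H^R_n(X) = H^Q_n(X) \oplus H^D_n(X)$ and the task reduces to computing $H^D_n(X)$. Noticing that $H^R_1(X)$ and $H^Q_1(X)$ are both free abelian (isomorphic to $\mathbb{Z}\mathcal{O}$ since $\partial_1 = 0$), and adopting the augmentation convention $\tilde{H}^R_0 = \mathbb{Z}$, one recognizes the right-hand side of the claimed formula (minus $H^Q_n$) as precisely the $(n-1)$-st Künneth formula for $\tilde{C}^R_*(X) \otimes C^Q_*(X)$: the isolated $H^Q_{n-1}(X)$ summand corresponds to the $p=0$ piece $\tilde{H}^R_0 \otimes H^Q_{n-1}$, while the restriction $p,q \geq 2$ in the $\operatorname{Tor}$ sum reflects the vanishing of $\operatorname{Tor}$ against a free abelian group. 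The objective is therefore to exhibit a chain homotopy equivalence
$$C^D_*(X) \simeq \bigl(\tilde{C}^R_*(X) \otimes C^Q_*(X)\bigr)[1].$$

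To construct this equivalence, I would filter $C^D_n(X)$ by the position of the first consecutive repeat: let $F_k C^D_n$ be generated by tuples $(x_1, \ldots, x_n)$ with $x_i = x_{i+1}$ for some $i \leq k$. On the associated graded piece $\operatorname{gr}_k C^D_n$, whose generators have the first repeat exactly at position $k$, define
$$\Psi\bigl(x_1, \ldots, x_{k-1}, x, x, y_1, \ldots, y_{n-k-1}\bigr) \,=\, (x_1, \ldots, x_{k-1}, x) \otimes [y_1, \ldots, y_{n-k-1}],$$
landing in $C^R_k(X) \otimes C^Q_{n-k-1}(X)$. A direct computation of the rack differential shows that the $k$-th and $(k+1)$-st face contributions (both removing one copy of the doubled element) cancel via quandle idempotency $x \ast x = x$, while the remaining faces split as (rack boundary on the left factor)$\,\otimes\,$(right factor) $+\,(-1)^{k+1}\,$(left factor)$\,\otimes\,$(quandle boundary on the right factor). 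This is exactly the tensor-product differential on $C^R_k \otimes C^Q_{n-k-1}$ after a degree shift by one, so $\Psi$ is a chain isomorphism on the associated graded.

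With $\Psi$ an isomorphism of associated graded complexes, the Künneth theorem applied to $\tilde{C}^R_*(X) \otimes C^Q_*(X)$ yields the desired decomposition of $H^D_n(X)$, and combining with the Litherland--Nelson splitting completes the identification of $H^R_n(X)$. The main obstacle will be upgrading the associated-graded isomorphism to a genuine chain homotopy equivalence of the filtered complexes: a generic element of $C^D_n(X)$ may have many consecutive repeats, so the ``first-repeat'' filtration is only approximately preserved under the boundary, and one must control the error terms. The cleanest route is an Eilenberg--Zilber-style argument producing explicit shuffle/homotopy corrections that force the induced spectral sequence to degenerate at $E^1$ with no nontrivial extensions, much as in the Dold--Kan normalization theorem for simplicial abelian groups; once this combinatorial bookkeeping is in place, the Künneth step is routine.
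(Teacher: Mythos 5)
The paper itself gives no proof of this statement --- it is quoted from \cite{PP} --- so I am judging your proposal on its own terms. Your global strategy (split off $H_n^Q(X)$ via Litherland--Nelson, then identify $C^D_*(X)$ up to homotopy with $\bigl(\tilde{C}^R_*(X)\otimes C^Q_*(X)\bigr)[1]$ and invoke the K\"unneth theorem) is the right one, and your bookkeeping of why the $\mathrm{Tor}$ sum restricts to $p,q\geq 2$ (freeness of $H_1^R$ and $H_1^Q$) is correct, as is the cancellation of the $i=k$ and $i=k+1$ faces via idempotency. The problem is the key step: the associated graded of your ``first-repeat'' filtration is not what you claim, either on the level of groups or of differentials. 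On groups: a tuple whose \emph{first} repeat is exactly at position $k$ has no consecutive repeats among $x_1,\dots,x_k$, so the left factor ranges over a complement of $C_k^D$ in $C_k^R$ (a copy of $C_k^Q$), while the tail $(y_1,\dots,y_{n-k-1})$ is unconstrained (a copy of $C^R_{n-k-1}$); you have the $Q$ and $R$ decorations on the wrong sides. On differentials: the faces $d_i$ with $i<k$ move the first repeat to position $k-1$, hence strictly lower the filtration and \emph{vanish} in $\operatorname{gr}_k$ --- they become the $d^1$ of the spectral sequence, not part of the $E^0$ differential --- while the faces with $i>k+1$ do not leave the left block alone, since the second term of the paper's $\partial_n$ multiplies \emph{all} of $x_1,\dots,x_{i-1}$ (including the entire left block) on the right by $x_i$. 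So $\operatorname{gr}_k$ carries a \emph{twisted} differential, not $\pm\, 1\otimes\partial^Q$, and the claimed ``chain isomorphism on the associated graded'' fails as stated.

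Two further ingredients are missing even after repairing the filtration. To untwist the action of the tail on the left block you need the nontrivial fact that right translation by any $y$ (the map $x\mapsto x*y$) induces an endomorphism of $C^R_*(X)$ chain homotopic to the identity --- this is where rack invertibility enters and it is the technical heart of \cite{PP} and of the Litherland--Nelson splitting, yet it appears nowhere in your argument. And degeneration of the spectral sequence, even if established, only computes the associated graded of $H_n^D(X)$, whereas the theorem asserts a direct sum in the presence of torsion (the $\mathrm{Tor}$ terms), so the extension problems over $\mathbb{Z}$ must actually be resolved; this forces you to produce an honest splitting at the chain level rather than an $E$-page computation. In short: right target and correct K\"unneth numerology, but the chain-level identification as written is incorrect, and the genuine content --- untwisting the right action and splitting the filtered complex --- has not been supplied.
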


	In particular, 
	\begin{itemize}
		\item[\Coffeecup]{$ H_4^R(X) =
			H_4^Q(X) \oplus H_3^Q(X) \oplus ( \mathbb{Z} \mathcal{O}\otimes H_2^Q(X))^2 \oplus \mathbb{Z}\mathcal{O}^2.$}
		\item[\Coffeecup]{$ H_5^R(X) = H_5^Q(X) \oplus H_4^Q(X) \oplus (H_3^R(X) \otimes H_1^Q(X)) \oplus (H_2^R(X) \otimes H_2^Q(X)) \oplus H_1^R(X) \otimes H_3^Q(X).$}	\end{itemize}
	
	\begin{theorem}[Etingof-Grana, \cite{EG}]
		Let $k$ be the number of orbits of a finite rack $(X,*).$ Then,
		\begin{enumerate}
			\item[\Coffeecup]{$rank(H_n^R(X)) = k^n,$ and}
			\item[\Coffeecup]{$rank(H_n^Q(X)) = k(k-1)^{n-1},$ if additionally $(X,*)$ is a quandle.}
		\end{enumerate}
	\end{theorem}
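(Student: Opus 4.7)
The plan is to tensor with $\mathbb{Q}$ and reduce both parts of the theorem to a short computation on the trivial rack $T_k$ on $k$ elements (with $a*b = a$). Since free ranks of finitely generated abelian groups are detected by $\dim_{\mathbb{Q}}(-\otimes\mathbb{Q})$, this is enough. For the rack statement I would construct a quasi-isomorphism $C_*^R(X;\mathbb{Q}) \simeq C_*^R(T_k;\mathbb{Q})$; the quandle statement will then follow by induction using the Przytycki--Putyra formula already stated above.

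On $T_k$ the boundary $\partial_n^R$ vanishes identically, because $x_j * x_i = x_j$ makes the two bracketed terms in the formula for $\partial_n^R$ coincide for every $i$, so $H_n^R(T_k) = \mathbb{Z}^{k^n}$ with trivial differential. To transfer this to $X$ I would use the orbit projection $\pi : X \to \mathcal{O}$ (a rack homomorphism onto $T_k$) together with the rational averaging section
\[
s\colon (O_{j_1},\ldots,O_{j_n}) \longmapsto \varepsilon_{O_{j_1}} \otimes \cdots \otimes \varepsilon_{O_{j_n}}, \qquad \varepsilon_O := \tfrac{1}{|O|}\sum_{y \in O} y \;\in\; \mathbb{Q}X.
\]
The key observation is that every right multiplication $*x$ is a bijection of $X$ preserving each orbit, so $\varepsilon_O * x = \varepsilon_O$ for all $x \in X$. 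Expanding $\partial_n^R s$ by multilinearity, the averaging in slots $\ell < i$ turns $y_\ell * y_i$ back into $\varepsilon_{O_{j_\ell}}$, and the two bracketed terms cancel exactly as they did on $T_k$. Hence $s$ is a chain map and $\pi_* s = \mathrm{id}$ tautologically.

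The main obstacle is upgrading this retraction to a chain homotopy equivalence by proving $s\pi_* \simeq \mathrm{id}$ on $C_*^R(X;\mathbb{Q})$. My strategy is to write $s\pi_* = e_1 e_2 \cdots e_n$ as a product of commuting idempotent slot-averaging operators $e_\ell$, construct a primary homotopy $h_\ell$ satisfying $\mathrm{id}-e_\ell = \partial h_\ell + h_\ell\partial$ one slot at a time, and telescope: $h = \sum_\ell e_1 \cdots e_{\ell-1}\, h_\ell$. Here the rack axiom (invertibility of each $*x$) is essential, since it is precisely what makes single-slot averaging commute past $\partial_n^R$ up to a controllable boundary error. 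Once this is established, $\mathrm{rank}\, H_n^R(X) = k^n$.

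For the quandle statement I would induct on $n$ using the Przytycki--Putyra formula. The base case $n=1$ gives $\mathrm{rank}\, H_1^Q(X) = \mathrm{rank}\, H_1^R(X) = k$ because $C_1^D = 0$. For the inductive step, taking rational dimensions in
\[
H_n^R(X) = H_n^Q(X) \oplus H_{n-1}^Q(X) \oplus \bigoplus_{\substack{p+q=n-1 \\ p,q \geq 1}} H_p^R(X) \otimes H_q^Q(X)
\]
(the Tor terms are rationally zero), and substituting $\mathrm{rank}\, H_p^R = k^p$ together with the inductive values $\mathrm{rank}\, H_q^Q = k(k-1)^{q-1}$ for $q < n$, reduces the identity to the geometric sum $\sum_{m=0}^{n-2} k^{n-2-m}(k-1)^m = k^{n-1}-(k-1)^{n-1}$, which yields $\mathrm{rank}\, H_n^Q(X) = k(k-1)^{n-1}$.
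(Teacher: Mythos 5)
The paper itself offers no proof of this statement---it is quoted from Etingof--Gra\~{n}a \cite{EG} in the survey portion of Section \ref{section 2}---so there is nothing internal to compare against; your proposal has to stand on its own. Its overall strategy (detect ranks over $\mathbb{Q}$, compute the trivial rack $T_k$ on the orbit set, and exhibit $C_*^R(T_k;\mathbb{Q})$ as a rational deformation retract of $C_*^R(X;\mathbb{Q})$ via the orbit projection $\pi$ and the averaging section $s$) is the standard route and is in the spirit of \cite{EG,LN}. The parts you actually verify are sound: $\partial\equiv 0$ on $T_k$; $s$ is a chain map because each $*x$ permutes each orbit, so $\varepsilon_O * x=\varepsilon_O$; $\pi_* s=\mathrm{id}$; and the rank bookkeeping for the quandle statement via the Przytycki--Putyra formula is correct (the Tor terms are finite, $H_0^Q=0$ in this paper's convention gives the base case, and the geometric-sum identity checks out).

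The genuine gap is exactly the step you flag as the main obstacle: $s\pi_*\simeq\mathrm{id}$. You do not construct the homotopy, and the plan you outline cannot work as written. An identity $\mathrm{id}-e_\ell=\partial h_\ell+h_\ell\partial$ forces $e_\ell$ to commute with $\partial$, but for $\ell\geq 2$ the slot-averaging operator $e_\ell$ is not a chain map: the $i=\ell$ term of $\partial_n$ deletes the $\ell$-th coordinate and renumbers the later ones, so $e_\ell$ does not even have a consistent meaning across adjacent degrees, let alone commute with the differential. (Slot $1$ is special---it is acted upon but never deleted---so $e_1$ genuinely commutes with $\partial$; your induction breaks at $\ell=2$.) A workable repair follows the split used elsewhere in this paper: decompose $C_*^R(X)$ by the orbit of the first coordinate, note that forgetting the first coordinate gives a chain map (up to sign) $C_n^{O_j}(X)\to C_{n-1}^R(X)$ with kernel $I_j\otimes \mathbb{Q}X^{\otimes(n-1)}$, where $I_j$ is the augmentation ideal of $\mathbb{Q}O_j$, and prove that this kernel complex is rationally acyclic (this is where finiteness and the chain-homotopy triviality of the inner action $w\mapsto w*y$ are used), then induct on $n$. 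Until that acyclicity, or an honest homotopy $s\pi_*\simeq\mathrm{id}$, is established, the rack Betti number claim---and hence the quandle claim you derive from it---remains unproved.
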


	\begin{theorem}[Niebrzydowski-Przytycki, \cite{NP1}]
		$H_n^R(X)$ for $n \geq 3$ contains $\mathbb{Z}_p$ torsion where $X$ is the dihedral quandle of order $p$.
	\end{theorem}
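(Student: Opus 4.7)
The plan is to settle the base case $n=3$ by a direct cocycle argument, and then propagate the torsion to every higher degree $n\geq 4$ formally, using the Przytycki--Putyra K\"unneth-type decomposition stated earlier.

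For $n=3$ (assuming $p$ is an odd prime), I would establish $\mathbb{Z}_p \hookrightarrow H_3^Q(R_p)$ by exhibiting Mochizuki's explicit quandle 3-cocycle $\phi\colon R_p^{\,3}\to \mathbb{Z}_p$, which has the shape
\[
\phi(x,y,z) \;=\; (x-y)\cdot\frac{(2z-y)^{p} - 2^{p}z^{p} + y^{p}}{p} \pmod p,
\]
the numerator being divisible by $p$ by the Frobenius identity. Two things must be verified: $\delta\phi = 0$, a direct (if tedious) mod $p$ polynomial computation; and non-triviality of $\phi$, done by pairing it against an explicit 3-cycle and observing a non-zero value in $\mathbb{Z}_p$. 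Combined with $H_2^Q(R_p)=0$ for odd $p$, universal coefficients then force a $\mathbb{Z}_p$-summand in the torsion of $H_3^Q(R_p)$. Feeding this into the Litherland--Nelson splitting $H_3^R(R_p)= H_3^Q(R_p)\oplus H_2^Q(R_p)\oplus \mathbb{Z}\mathcal{O}^2$ finishes $n=3$.

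For $n\geq 4$, I would read off $\mathbb{Z}_p$ from a suitable summand of the Przytycki--Putyra decomposition. When $n=4$, the summand $H_{n-1}^Q(R_p)=H_3^Q(R_p)$ already contains $\mathbb{Z}_p$. When $n\geq 5$, choose $(r,s)=(n-4,3)$ inside the summand $\bigoplus_{r+s=n-1;\,r,s\geq 1} H_r^R(R_p)\otimes H_s^Q(R_p)$; since $R_p$ is connected for $p$ odd, Etingof--Gra\~na gives $\mathrm{rank}\,H_{n-4}^R(R_p)=1$, so it has a free $\mathbb{Z}$ summand, whence $\mathbb{Z}\otimes \mathbb{Z}_p = \mathbb{Z}_p$ injects into $H_n^R(R_p)$. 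The main obstacle is clearly concentrated in the base case: verifying the cocycle identity for $\phi$ and its non-vanishing on a chosen test cycle are the only genuinely computational steps, both resting on mod $p$ arithmetic (binomial coefficients and Frobenius). Once $n=3$ is done, the rest follows formally from the structural theorems of Litherland--Nelson, Przytycki--Putyra, and Etingof--Gra\~na already recalled above.
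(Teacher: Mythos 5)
The paper does not prove this statement at all: it is quoted as a known result from \cite{NP1} in the survey portion of Section \ref{section 2}, so there is no internal proof to compare against. Judged on its own, your strategy is sound and is, in part, necessarily different from the original: the Przytycki--Putyra K\"unneth formula you use to propagate the torsion upward postdates \cite{NP1}, which had to produce explicit higher cycles and cocycles by hand. Your reduction of everything to the single base case $n=3$ via the structural theorems (Litherland--Nelson splitting, Etingof--Gra\~na ranks, Przytycki--Putyra decomposition) is clean and correct: for $n\geq 5$ the summand $H_{n-4}^R(R_p)\otimes H_3^Q(R_p)$ does contain $\mathbb{Z}\otimes\mathbb{Z}_p=\mathbb{Z}_p$ as a direct summand, since $R_p$ is connected for $p$ odd and a finitely generated abelian group of rank one splits off a free $\mathbb{Z}$.

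Two points need repair, both in the base case. First, your formula for Mochizuki's cocycle is off: the standard generator of $H^3_Q(R_p;\mathbb{Z}_p)$ is $(x-y)\bigl((2z-y)^p+y^p-2z^p\bigr)/p$, whereas you wrote $2^pz^p$ in place of $2z^p$; the two numerators differ by $(2^p-2)z^p$, which is divisible by $p$ but whose quotient $\frac{2^p-2}{p}z^p$ is generally nonzero mod $p$, so your variant differs from the genuine cocycle by $(x-y)\cdot c\cdot z$ for a nonzero constant $c$ and need not satisfy $\delta\phi=0$. Since verifying the cocycle identity is the one genuinely computational step of your plan, getting the formula exactly right is not cosmetic. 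Second, the universal coefficient step as stated only yields a nontrivial homomorphism $H_3^Q(R_p)\to\mathbb{Z}_p$ (using $H_2^Q(R_p)=0$ to kill the Ext term); to conclude this detects \emph{torsion} rather than a free summand you must also invoke the Etingof--Gra\~na/Litherland--Nelson rank computation $\mathrm{rank}\,H_3^Q(R_p)=k(k-1)^2=0$ for the connected quandle $R_p$, so that $H_3^Q(R_p)$ is finite. You have that result available, but it should be said explicitly. Finally, note that the statement as quoted must implicitly assume $p$ an odd prime (as you do): $R_2$ is the trivial two-element quandle and its rack homology is free.
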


	\begin{theorem}[Przytycki-Yang, \cite{PY}]
		Let $(Q,*)$ be a finite quasigroup quandle. Then the torsion subgroup of $H_n^R(Q)$ is annihilated by $|Q|.$
	\end{theorem}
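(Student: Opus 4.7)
\medskip

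\noindent\textbf{Proof proposal (for the Przytycki--Yang theorem).}

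The plan is to exhibit $|Q|\cdot\mathrm{id}$ as chain-homotopic to an operator whose image on cycles is controlled by the quasigroup structure, and then to argue that this image represents only the free part of $H_n^R(Q)$.

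First, I would construct, for each $b\in Q$, the ``add $b$ at the end'' homotopy
\[
h_b^{(n)}\colon C_n^R(Q)\longrightarrow C_{n+1}^R(Q), \qquad h_b^{(n)}(x_1,\dots,x_n)=(-1)^{n+1}(x_1,\dots,x_n,b).
\]
A direct computation from the boundary formula given earlier in the paper --- in which all ``middle-column'' terms pair off with opposite signs --- yields
\[
\partial_{n+1}^{R} h_b^{(n)} + h_b^{(n-1)} \partial_{n}^{R} \;=\; \mathrm{id}_{C_n^R} - R_b^{\#},
\]
where $R_b^{\#}(x_1,\dots,x_n)=(x_1*b,\dots,x_n*b)$. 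In particular, each $R_b^{\#}$ induces the identity on $H_*^R(Q)$, and summing over $b\in Q$ gives that $|Q|\cdot\mathrm{id}$ is chain-homotopic to the averaging operator $S:=\sum_{b\in Q}R_b^{\#}$; thus $|Q|\cdot[c]=[S(c)]$ in $H_n^R(Q)$ for every cycle $c$.

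Next, I would invoke the quasigroup hypothesis to rewrite $S$. Because every left-translation $L_x\colon y\mapsto x*y$ is a bijection of $Q$, the substitution $y=L_{x_1}(b)$ gives
\[
S(x_1,\dots,x_n)\;=\;\sum_{b\in Q}\bigl(L_{x_1}(b),\dots,L_{x_n}(b)\bigr)\;=\;\sum_{y\in Q}\bigl(y,\phi_2(y),\dots,\phi_n(y)\bigr),
\]
with $\phi_i:=L_{x_i}L_{x_1}^{-1}$. Thus $S$ factors through the ``diagonal'' subcomplex of chains of the form $\sum_y(y,\phi_2(y),\dots,\phi_n(y))$, indexed by $(n-1)$-tuples of bijections of $Q$. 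Since $Q$ is connected (a consequence of being a quasigroup quandle), such diagonal chains represent multiples of the generator of the rank-one free summand of $H_n^R(Q)$ predicted by the Etingof--Grana theorem; any torsion class should therefore be killed by $S$.

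Finally, to convert this picture into a rigorous vanishing statement, I would construct a second chain homotopy $H'$ specific to the quasigroup structure --- built from the inverse left-translations $L_{x_1}^{-1}$ and again an insertion at the end --- whose boundary on cycles sends $S(c)$ to a multiple of the canonical diagonal cycle $\sum_y(y,y,\dots,y)$ (or its ``twisted'' counterpart that generates the free part). Combining this with the identity $|Q|\cdot[c]=[S(c)]$ from Step 1 then yields $|Q|\cdot[c]=0$ whenever $[c]$ is torsion.

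\medskip

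\noindent\textbf{Principal obstacle.} The first chain homotopy alone only rediscovers $|Q|\cdot[c]=|Q|\cdot[c]$, since $S$ is itself homotopic to $|Q|\cdot\mathrm{id}$; breaking this tautology is the crux. The genuine content must come from the quasigroup axiom, which is used nowhere in Step 1 but is essential in reducing $S(c)$ to a ``diagonal'' form. Verifying that the diagonal subcomplex carries only the rank-one free homology --- equivalently, constructing the secondary chain homotopy $H'$ above or, alternatively, identifying $H_*^R(Q)$ with the group homology $H_*(\mathrm{Inn}(Q);\mathbb{Z}[Q])$ and invoking the transfer/corestriction identity for the point-stabilizer (which has index $|Q|$ in $\mathrm{Inn}(Q)$ by connectedness) --- is where the real work lies.
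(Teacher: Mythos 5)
This theorem is quoted in the paper from \cite{PY} without proof, so there is no internal argument to compare yours against; it has to be judged on its own and against the actual proof of Przytycki and Yang. Your Step 1 is correct and is indeed the standard opening move: for the boundary formula in the paper the homotopy $h_b^{(n)}(x_1,\dots,x_n)=(-1)^{n+1}(x_1,\dots,x_n,b)$ does satisfy $\partial_{n+1}h_b^{(n)}+h_b^{(n-1)}\partial_n=\mathrm{id}-R_b^{\#}$ for any rack (the middle terms cancel exactly as you say), so each $R_b^{\#}$ acts as the identity on homology and $|Q|\cdot\mathrm{id}$ is chain homotopic to $S=\sum_{b\in Q}R_b^{\#}$. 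The rewriting of $S$ through the bijections $L_{x_i}$ is also correct and is the first genuine use of the quasigroup hypothesis.

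The gap is Step 3, and it is not a small one. The assertion that a cycle of the form $S(c)=\sum_y(y,\phi_2(y),\dots,\phi_n(y))+\cdots$ ``represents a multiple of the generator of the rank-one free summand'' is logically equivalent to the theorem itself: since $[S(c)]=|Q|[c]$, saying this class is free whenever $[c]$ is torsion is just a restatement of $|Q|\cdot\mathrm{tor}\,H_n^R(Q)=0$. You flag this circularity yourself, but the proposed escape --- a ``secondary chain homotopy $H'$ built from inverse left-translations'' --- is never constructed, and no reason is offered why the span of diagonal chains should meet the cycles only in the free part. In \cite{PY} this is exactly where all the work lies: one must build explicit further homotopies exploiting left division to push $S(c)$ into a subcomplex whose homology is demonstrably free, and the argument is delicate enough that the statement remained a conjecture of \cite{NP2} for several years, with only special cases settled in \cite{Cla,Nos1}. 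The alternative route you mention via the rack space and a transfer argument for the point stabilizer in $\mathrm{Inn}(Q)$ is closer in spirit to those partial results than to a full proof; as sketched it would at best annihilate classes in the image of a restriction map, not all torsion. So the proposal carries out the correct (and well-known) reduction but stops short of the theorem's actual content.
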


	The above result was proposed as a conjecture in \cite{NP2} and special cases of the above theorem was proven in \cite{Cla,Nos1}.
		
\subsection{The second rack homology group}

In this subsection, we concentrate on the quandle $GQ(o_1 \mid o_2 \mid \cdots \mid o_k).$ In addition, we also allow $k$ to be infinite and orbits to be $\mathbb{Z}.$ When an orbit $O_i$ is $\mathbb{Z},$ we denote its elements by $\{...,a^i_{p-1},a^i_p,a^i_{p+1},... \},$ for $p \in \mathbb{Z}$ and the function $f_{i,j}$ for all $0 \leq j \neq i \leq k,$ is given by: $f_{i,j}:O_i \longrightarrow O_i,$ with $f_{i,j}(a^i_p) = a^i_{p+1},$ for all $p \in \mathbb{Z}.$ When an orbit $O_i$ is finite, we denote it by $\{a^i_0,a^i_1,...,a^i_{o_i-1}\},$ and the function is given by: $f_{i,j}:O_i \longrightarrow O_i,$ with $f_{i,j}(a^i_p) = a^i_{p+1},$ for all $p \in \mathbb{Z}_{o_i}.$ 

Recall that, by definition, all the orbits $O_i$ are right action orbits of a quandle $(X,*)$ and the chain complex $C_*^R(X)$ splits: 
$$C_*^R(X)= C^{O_1}_*(X) \oplus C^{O_2}_*(X) \oplus \cdots \oplus C^{O_{k}}_*(X)$$ where $C^{O_i}_*(X)$ is a subchain complex with basis $O_i\times X^{n-1}$. Thus, it suffices to work with only $C^{O_i}_*(X).$ Let $e_p^i = a_p^i - a_{p-1}^i,$ for all $p \in \mathbb{Z}_{o_i}$ and $E_i = \{e_p^i\}^{}_{0<p<o_i}.$ Notice that $\{a_0^i\}\cup E_i$ is a basis of $\mathbb{Z}O_i$. For a basis of $\mathbb{Z}(O_i \times O_j)$ we also can consider: $(\{a_0^i\}\times \{a_0^j\}) \sqcup (E_i \times E_j) \sqcup (\{a_0^i\} \times E_j) \sqcup (E_i \times \{a_0^j\}).$ For simplicity, we write $C^i_*(X)$ for $C^{O_i}_*(X),$ $a_p$ for $a_p^1 \in O_1$ and $e_p$ for $e_{p}^1.$ 

We continue with our main theorem:

\begin{theorem}
	
	Let $(X,*) = GQ(o_1 \mid o_2 \mid \cdots \mid o_k).$ As before, let $O$ denote the set of orbits of $(X,*).$ Then, 
	\begin{enumerate} 
		\item{$H_1^R(X)= \mathbb{Z}\mathcal{O}.$ In particular, when $k$ is finite, $H_1^R(X)= \mathbb{Z}^k$.}
		 \item{For $k=2$,  $1 \leq o_1,o_2 < \infty$ we have $H_2^R(X)= \mathbb{Z}^4\oplus \mathbb{Z}^2_{gcd(o_1,o_2)}$.}
		\item{For $k>3,$ $H^1_2(X)= \mathbb{Z}{\mathcal O} \oplus \mathbb{Z}_{gcd(2, o_1,o_2,...,o_k)}$ if each $X_j$ is finite.}
	\end{enumerate}

	The case when $X_j$ is infinite ($X_j=\mathbb{Z}$) is explained in Remark \ref{rem29} and Lemma \ref{210}. We stress here that the torsion part is given by the same formulae but the free part does not necessarily conform to the formula valid for finite quandles.
\end{theorem}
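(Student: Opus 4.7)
The overall strategy is to exploit the orbit-wise decomposition $C_*^R(X) = \bigoplus_{i=1}^{k} C_*^{O_i}(X)$ and compute $H_2$ of each summand independently; by symmetry among the orbits, it suffices to analyze $C_*^{O_1}(X)$ and then sum. For part (1), since $C_0^R = 0$, we have $H_1^R(X) = C_1^R / \mathrm{im}(\partial_2)$, and the formula $\partial_2(x_1, x_2) = x_1 - f_{i,j}(x_1)$ for $x_1 \in O_i$, $x_2 \in O_j$ shows that $\mathrm{im}(\partial_2)$ is generated by differences $x - f_{i,j}(x)$, which collapses $\mathbb{Z}X$ down to $\mathbb{Z}\mathcal{O}$.

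For parts (2) and (3), I would use the basis $(\{a_0^1\} \times \{a_0^j\}) \sqcup (\{a_0^1\} \times E_j) \sqcup (E_1 \times \{a_0^j\}) \sqcup (E_1 \times E_j)$ on each $\mathbb{Z}(O_1 \times O_j)$ already introduced before the theorem. In this basis, $\partial_2$ vanishes on the $j = 1$ block (because $f_{1,1} = \mathrm{id}$), while for $j \neq 1$ it depends only on the first coordinate via $(a_p^1, \cdot) \mapsto -e_{p+1}^1$, making $\ker(\partial_2)$ straightforward to list. I would then expand $\partial_3(a_p^1, a_q^j, a_r^k)$ using the boundary formula, obtaining four terms that I rewrite in the $E$-basis. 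A free generator for each ordered pair $(1, j)$ of orbit labels survives in the cokernel of $\partial_3$, contributing the $\mathbb{Z}\mathcal{O} \cong \mathbb{Z}^k$ free part of $H^1_2$ (consistent with Etingof--Gra\~na). The torsion arises from telescoping identities along the cyclic orbits: summing $\partial_3$-images of the form $\sum_{q=0}^{o_j - 1}(\cdots)$ produces an integer multiple of a distinguished cycle, showing it has order dividing $o_j$. Intersecting these divisibility constraints over all admissible $j$ yields $\mathbb{Z}_{gcd(o_1, o_2)}$ when $k = 2$; when $k \geq 3$, the additional $\partial_3$-generators with three pairwise distinct orbit labels produce mixed relations that collapse most of the torsion, leaving the residual $\mathbb{Z}_{gcd(2, o_1, \ldots, o_k)}$.

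The main obstacle I anticipate is the combinatorial bookkeeping for $\partial_3$: after the basis change, the four summands of $\partial_3(a_p^1, a_q^j, a_r^k)$ land in several different blocks of $C_2^{O_1}$ simultaneously, and each cyclic-sum identity must be verified modulo both $o_j$ and $o_k$. Pinning down why the extra triple-orbit relations yield precisely a factor of $2$, rather than killing the torsion entirely, is the most delicate point; I expect it to stem from a residual antisymmetry in the boundary formula that cannot be further reduced. For the infinite case $X_j = \mathbb{Z}$, the local $\partial_3$-identities producing the torsion are insensitive to finiteness of orbits and carry over verbatim, but the cyclic-sum identities underlying the free-rank count no longer close up, explaining why the free part must be adjusted as indicated in Remark \ref{rem29} and Lemma \ref{210} while the torsion formula remains intact.
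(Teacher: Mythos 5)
Your plan follows the paper's proof essentially step for step: the orbit-wise splitting of $C_*^R(X)$, the change to the basis built from $\{a_0^i\}\cup E_i$, the case analysis of $\partial_3(x,y,z)$ according to which orbits $y$ and $z$ lie in, the telescoping relations around each finite cycle giving $o_i\cdot(\text{generator})\equiv 0$, and the identification of the two candidate torsion generators producing $\mathbb{Z}_{o_1}\otimes\mathbb{Z}_{o_2}=\mathbb{Z}_{\gcd(o_1,o_2)}$ when $k=2$. Parts (1) and (2) of your outline are sound and match the argument of Lemma \ref{Lemma 2.2}.

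The one genuine gap is exactly the point you flag yourself: where the factor of $2$ comes from when $k\geq 3$. ``A residual antisymmetry in the boundary formula'' is the right instinct, but it is the entire content of part (3) and has to be made concrete. The paper's device is to symmetrize the new triple-orbit boundaries: for $x\in O_1$, $y\in O_i$, $z\in O_j$ with $1,i,j$ pairwise distinct, one has $\partial_3(x,y,z)=(x,z-y)-(f(x),z-f(y))$, and adding the boundary with $y$ and $z$ exchanged cancels the $(x,\cdot)$ terms and leaves
$$0\equiv (f(x),z-f(z))+(f(x),y-f(y)).$$
Each summand is, by the relations already established in the two-orbit analysis, equivalent to $-(e_1,a_0)$; hence $2(e_1,a_0)\equiv 0$, which combined with $o_i(e_1,a_0)\equiv 0$ yields $\mathbb{Z}_{\gcd(2,o_1,\dots,o_k)}$. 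Without this symmetrization you have no mechanism forcing precisely the extra factor of $2$, nor any way to rule out that the triple-orbit relations kill the torsion entirely. A second, smaller inaccuracy: the torsion-producing relations are not ``insensitive to finiteness of orbits'' --- the relation $o_i(e_1,a_0)\equiv 0$ arises precisely from telescoping once around the finite cycle $O_i$ and disappears when $O_i=\mathbb{Z}$; the torsion formula survives in the infinite case only because the \emph{other} (finite) orbits still impose their relations on the same identified generator, as spelled out in Remark \ref{rem29} and Lemma \ref{210}.
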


The formula of (1) in the above theorem holds for any rack and follows directly from the fact that $\partial_2(x_1,x_2)= (x_1 - x_1*x_2)$. For finite $o_1,$ $\partial_2(C^1_2(X))$ is freely generated by
$(o_1-1)$ elements: $\partial_2(a_0,a_0^2)=e_1$,..., $\partial_2(a_{o_1-2},a_0^2)=e_{o_1-1}$.
From this, it follows that $H^1_1(X)= \mathbb{Z}$ with the generator $[a_0]\equiv [a_1]\equiv [a_2]\equiv \cdots \equiv [a_{o_1-1}]$.
We can change the basis $O_1\times X$ of $C^1_2(X)$ to a new basis  composed of two parts: the basis of $ker( \partial_2)$ and the basis of a subspace isomorphic to
$ im(\partial_2)$. The first part is composed of $O_1\times O_1$ and $\{(a_i,e_j^2)\}$, where $0\leq i <o_1, \  0<j<o_2$, and
$(\sum_{i=0}^{o_1-1}a_i,a_0^2).$ The second part is composed of $\{(a_i,a_0^2)\}_{0\leq i \leq o_1-2}$.

We start the proof of the formula for the second homology by considering the case: $k = 2$ and $0<o_1,o_2<\infty.$ For simplicity, we work with $C^1_*(X)$ and its second rack homology group $H_2^1(X).$

\begin{lemma}\label{Lemma 2.2}
	
	$ H^1_2(X) = \mathbb{Z}^2 \oplus \mathbb{Z}_{gcd(o_1,o_2)},$ where the free part is generated by the class $[(a_0,a_0)]$ and $[\sum_{i=0}^{o_1-1}(a_i,a_0^2)]$ 
	and the torsion part by $[(e_1,a_0)]\equiv [(a_0,e_1^2)]$. 
\end{lemma}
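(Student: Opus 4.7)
\emph{Strategy.} The plan is to compute $\ker(\partial_2)/\mathrm{im}(\partial_3)$ directly using the basis of $\ker(\partial_2)$ identified in the discussion before the lemma, namely $O_1\times O_1$ together with $\{(a_p,e_j^2): 0\leq p<o_1,\ 0<j<o_2\}$ and $\sigma := \sum_{i=0}^{o_1-1}(a_i,a_0^2)$. Every basis element of $C_3^1(X)$ has the form $(a_p,x_2,x_3)$ with $x_2,x_3\in O_1\sqcup O_2$, so I would split the boundary computation into four cases according to the orbits of $x_2$ and $x_3$. When $x_2,x_3\in O_1$, idempotency in each factor immediately forces $\partial_3=0$. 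When $x_2\in O_1$ and $x_3\in O_2$ one finds
\[
\partial_3(a_p,a_q,a_r^2) \;=\; (a_{p+1},a_{q+1})-(a_p,a_q),
\]
which identifies each diagonal of $O_1\times O_1$ with a single homology class $c_d:=[(a_0,a_d)]$ for $d=0,\dots,o_1-1$.

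\emph{Case C and the class $\delta$.} For $x_2\in O_2$, $x_3\in O_1$ with $q<o_2-1$ the boundary formula gives
\[
\partial_3(a_p,a_q^2,a_r) \;=\; (a_p,e_{q+1}^2)\;+\;(a_p,a_r)-(a_{p+1},a_r),
\]
so $(a_p,e_{q+1}^2) \equiv c_{r-p-1}-c_{r-p}$ in $H_2^1$ (indices mod $o_1$). Independence of the right-hand side in $r$ forces $c_{d-1}-c_d$ to be constant in $d$; writing $\delta:=c_1-c_0$ one obtains the arithmetic progression $c_d=c_0+d\delta$, and cyclicity $c_{o_1}=c_0$ gives $o_1\delta=0$. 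Simultaneously, all kernel basis elements of the form $(a_p,e_j^2)$ are identified with the common value $-\delta$.

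\emph{The wraparound at $q=o_2-1$ and completion.} The step I expect to be the main obstacle is the wraparound subcase of Case C at $q=o_2-1$: here $(a_p,e_{o_2}^2)$ is not itself a kernel basis element, but the telescoping identity $\sum_{j=1}^{o_2}e_j^2=0$ rewrites $(a_p,e_{o_2}^2)=-\sum_{j=1}^{o_2-1}(a_p,e_j^2)\equiv (o_2-1)\delta$. Comparing with the same Case C boundary, which gives $(a_p,e_{o_2}^2)\equiv -\delta$, one deduces $o_2\delta=0$, so that $\delta$ has order dividing $\gcd(o_1,o_2)$. It remains to handle the case $x_2,x_3\in O_2$: expanding $(a_i,a_j^2)=(a_i,a_0^2)+\sum_{k=1}^j (a_i,e_k^2)$ cancels the $(a_i,a_0^2)$ contributions and leaves a $\mathbb{Z}$-linear combination of $(a_p,e_k^2)-(a_{p+1},e_k^2)$, which is already trivial in $H_2^1$ by Case C. Finally, inspection of the four boundary formulas shows that $\sigma$ never appears as a summand in the kernel-basis expansion of any boundary, so $[\sigma]$ is a free generator independent of $c_0$ and $\delta$. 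Assembling these facts produces the presentation $H_2^1(X) = \mathbb{Z}\langle c_0\rangle\oplus\mathbb{Z}\langle[\sigma]\rangle\oplus\mathbb{Z}_{\gcd(o_1,o_2)}\langle\delta\rangle$, while the Case C boundary with $(p,q,r)=(0,0,0)$, namely $\partial_3(a_0,a_0^2,a_0)=(a_0,e_1^2)+(a_0,a_0)-(a_1,a_0)$, gives the identification $[(a_0,e_1^2)]\equiv[(e_1,a_0)]$ asserted in the statement.
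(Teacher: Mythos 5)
Your proposal is correct and follows essentially the same route as the paper: the same basis of $\ker\partial_2$, the same four-case evaluation of $\partial_3(x,y,z)$ according to the orbits of $y$ and $z$, and the same wraparound arguments yielding $o_1\delta=0$ and $o_2\delta=0$, hence the $\mathbb{Z}_{\gcd(o_1,o_2)}$ torsion. The only difference is bookkeeping: you parametrize $\mathbb{Z}(O_1\times O_1)$ by diagonal classes $c_d$ and extract all identifications from the single boundary $\partial_3(a_p,a_q^2,a_r)$, whereas the paper sorts the same relations into types $(I)$, $(IIa)$--$(IIc)$, $(III)$ and computes the quotient in stages.
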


\begin{proof} 

	The kernel $\partial_2:C_2^1(X) \to C_1^1(X)$ has a basis:
	$$(O_1 \times O_1) \sqcup (E_1\times E_2) \sqcup (\{a_0\}\times E_2) \sqcup [\sum_{i=0}^{o_1-1}(a_i,a_0^2)].$$ 
	
	Now we analyze the image of $\partial_3^R: C_3^1(X) \to C_2^1(X)$. For $x,y,z \in X,$ $$\partial_3^R(x,y,z)= (x,z)- (x,y) - (x*y,z)+ (x*z,y*z),$$ that is,
	\begin{equation*} 
	\partial_3^R(x,y,z)=
	\begin{cases}
	0, & \text{if $y, z \in O_1$} \\
	-(x,y) + (f(x),f(y))  & \text{if $y\in O_1, z\in O_2$}\\
	(x,z)- (x,y) - (f(x),z) + (x,f(y)) & \text{if $y\in O_2, z\in O_1$}\\ 
	(x,z)- (x,y) - (f(x),z) + (f(x),y) & \text{if $y,z \in O_2$}
	\end{cases}
	\end{equation*}
	
	In the above equation, if $x = a_p^i,$ $f(x)$ denotes the element $a_{p+1}^i.$ Similarly, $f(y)$ is defined. Therefore, the relations obtained in $ker(\partial_2)$ from each case are:
	\begin{enumerate}
		\item[$(I)$]{$x,y\in O_1, z \in O_2$ gives $(x,y) \equiv (f(x),f(y)),$ or equivalently: $(a_i,a_j)\equiv (a_{i+1},a_{j+1})$.}
		\item[$(II)$]{$x,z\in O_1, y\in O_2$ gives    $(f(x)-x,z)\equiv (x,f(y)-y)$. We break this relation into 3 independent classes:
			\begin{enumerate}
				\item[$(a)$]{$[(e_i,e_j)]\equiv 0.$ This relation is obtained by considering different elements $z,z' \in O_1$ to obtain:
					$(f (x) - x, z' - z) \equiv 0,$ which is equivalent to $[(e_i,e_j)]\equiv 0$.}
				\item[$(b)$]{$[(a_0,e_i^2-e_1^2)]\equiv 0$. This relation is obtained by considering different elements $y,y' \in O_2.$}
				\item[$(c)$]{$[(e_1,a_0)]\equiv [(a_0,e_1^2)].$}
			\end{enumerate}}
		\item[$(III)$]{$[(e_i,e_j^2)] \equiv 0$.}
	\end{enumerate}

	We will now analyze $H_2^1(X)$ step by step.
	
	\begin{enumerate} 
	
	\item { $$\frac{\mathbb{Z}(O_1\times O_1)}{<I,IIa>}= \mathbb{Z} \oplus \mathbb{Z}_{o_1}$$ generated by $(a_0,a_0)$ and $(e_1,a_0)$ with $o_1(e_1,a_0)\equiv 0$.\\
	To show this, let us consider the following basis of  $\mathbb{Z}(O_1 \times O_1)$: $\{(a_0,a_0),(e_i,e_j), (a_0,e_j),(e_i,a_0) \mid 0 < i,j < o_1 \}.$ In $\mathbb{Z}(O_1\times O_1)/(IIa),$ the basis reduces to $\{(a_0,a_0),(a_0,e_j),(e_i,a_0) \mid 0<i,j<o_1\}.$ Relations $(I)$ modulo $(IIa)$ are now of the form:
	$$0\equiv (a_{i+1},a_{j+1})- (a_i,a_j)$$
	$$=(a_0+e_1+ \cdots +e_{i+1},a_0+e_1+ \cdots +e_{j+1})-(a_0+e_1+ \cdots +e_{i},a_0+e_1+ \cdots +e_{j})$$ 
	$$\equiv (a_0,e_{j+1})+(e_{i+1},a_0).$$
	The relation holds for all $0 \leq i,j \leq o_1-1$, where $a_{o_1}=a_0$ and $e_{o_1}=a_0 - a_{o_1-1}$. Using equations for $i,j < o_1-2,$ we obtain:
	$$(a_0,e_1)\equiv (a_0,e_2)\equiv \cdots \equiv (a_0,e_{o_1-1})\equiv -(e_1,a_0)\equiv -(e_1,a_0)\equiv \cdots \equiv -(e_{o_1-1},a_0).$$
	
	If we consider the relation $(e_{o_1},a_0)\equiv (e_1,a_0)$ and use the fact that $e_{o_1}= - (a_0-a_{o_1-1}) = -((a_1-a_0) + (a_2-a_1)+ \cdots +(a_{o_1-1}-a_{o_1-2}))
	= -(e_1+e_2+ \cdots +e_{o_1-1})$ we have, $(e_{o_1},a_0)\equiv (e_1,a_0)$ which is equivalent to $o_1(e_1,a_0)\equiv 0$. Other relations involving $e_{o_1}$ do not bring any new relations.}
	
	\item{$$\frac{(O_1\times O_2 \cap ker(\partial_2^R))}{(IIb,III)} = \mathbb{Z}\oplus \mathbb{Z}_{o_2}$$ generated by $(\sum_{i=1}^{o_1-1}a_i,b_0)$ and $(a_0,e_1^2)$ 	with $o_2(a_0,e_1^2)\equiv 0$.
		
	We proceed as in part (1) considering first $$\frac{(X_1\times X_2 \cap ker \partial_2)}{(III)}$$ to get
	free group with basis $(a_0,e_j^2)$ and $(\sum_{i=0}^{o_1-1}a_i,a_0^2)$.
	Now we add relations $(IIb)$ that is $(a_0,e_j^2-e_1^2)\equiv 0$. This equation holds for all $0< j \leq o_2$ 
	so including also  $e_{o_2}^2=a_0^2-a_{o_2-1}^2=
	-(e_1^2+e_2^2+ \cdots +e_{o_2-1}^2)$. Thus equations from $(IIb)$ for $j<o_2$ give $(a_0,e_1^2)\equiv (a_0,e_2^2)\equiv \cdots \equiv (a_0,e_{o_2-1}^2)$ 
	and for $j=o_2$ we get additionally equation equivalent to $o_2(a_0,e_1^2)\equiv 0$.}

	\item {Observe that adding relation $(IIc)$ is making the tensor product of $Z_{o_1}\otimes Z_{o_2}$ and thus $$H_2^1(X) = \frac{ker (\partial_2)}{(I,II,III)} = \mathbb{Z} \oplus (\mathbb{Z}_{o_1}\otimes \mathbb{Z}_{o_2}) \oplus \mathbb{Z}.$$}
	
\end{enumerate}
\end{proof}

\begin{remark}\label{rem29}
	
	A natural question now is that what happens if $o_1$ or $o_2$ is infinite. The proof is very similar except that:
	
	\begin{enumerate}
		\item{If both $o_1$ and $o_2$ are infinite then $H_2^1(X)$ is free with ($H_2^1=\mathbb{Z}^2$) with basis classes: $(a_0,a_0)$ and $(a_0,e_1)$.}
		\item{If $o_1=\infty $ but $o_2$ is finite, then there is no free term $\sum_{i=0}^{o_1-1}a_i$  and the element $[(e_1,a_0)]$ is a free element in $\mathbb{Z}(O_1\times O_1)/(I,IIa)$ which in homology generates $\mathbb{Z}_{o_2}$, and thus $H_2^1(X)= \mathbb{Z}\oplus \mathbb{Z}_{o_2}$.}
		\item{Similarly if $o_2=\infty $ but $o_1$ is finite then $[(a_0,e_i^2)]$ is free in $$\frac{(O_1\times O_2 \cap ker \partial_2)}{(IIa,III)}.$$ But in $H_2^1(X)$ it gives torsion $Z_{o_1}.$ Therefore $H_2^1(X)= \mathbb{Z} \oplus \mathbb{Z}_{o_1}.$}
	\end{enumerate}

	In cases (2) and (3), $tor(H_2^R(X)= tor(H_2^Q(X)= \mathbb{Z}_{gcd(o_1,o_2)}$ while in the case (1) $tor(H_2^R(X)= tor(H_2^Q(X)= 0$. Further, if $X$ is infinite then the results in \cite{EG,LN} does not necessarily hold. If $o_1$ is finite but $o_2$ is infinite then $free (H^R_2(X))= \mathbb{Z}^2$ and $free(H^Q_2(X)=0).$
	
	Observe that for the degenerate part of the second rack homology group, we have immediately that $H^{D}_2(X)=\mathbb{Z}^2$ generated by the classes: $[(a_0,a_0)] \equiv [(a_i,a_i)],$ for $0<i<o_1$ and $[(a_0^2,a_0^2)] \equiv [(a_i^2,a_i^2)],$ for $0<i<o_2$ so that $H^D_2(X)= \mathbb{Z}{O}$ and it holds for finite or infinite $X$. Hence, using this and the splitting theorem in \cite{LN}, we know the second quandle homology group as well.

\end{remark}

We will now prove the theorem for $k\geq 3.$ To deal with infinite quandles we define ${O}_f$ to be the set of orbits with finite $O_j.$ 

\begin{lemma}\label{210} \
	\begin{enumerate} 
		\item{$H^1_1(X)= \mathbb{Z}$ generated by the class $[a_0]\equiv [a_1]\equiv [a_2]\equiv \cdots \equiv [a_{o_1-1}]$.}
		\item{For $k \geq 3$ we have $H_2^1(X)= \mathbb{Z}^k \oplus \mathbb{Z}_{gcd(2, o_1,o_2,...,o_k)}$ for finite $X$ of $k$ orbits. More generally, for quandles which are infinite:
		$$tor H_2^1(X) = \mathbb{Z}_{gcd(2, o_1,o_2,...,o_k)},$$ and the free part of $ H_2^1(X)$ is equal to $\mathbb{Z}O$ if $o_1$ is finite; otherwise it loses one $\mathbb{Z}$.} 
	\end{enumerate}
\end{lemma}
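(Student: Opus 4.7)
The plan is to generalize Lemma \ref{Lemma 2.2}, keeping its three-step structure (basis of $\ker\partial_2$, image of $\partial_3$, quotient) but tracking the new relations that appear once a third orbit is present. Since $C_*^R(X)$ splits by orbits and we work inside $C_*^1(X)$, the identity $H_1^1(X)=\mathbb{Z}$ is immediate from $\partial_2(a_i,a_0^j)=-e_{i+1}$ for any $j\neq 1$, so I concentrate on $H_2^1(X)$.

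First I would fix an explicit basis of $\ker\partial_2\subset\mathbb{Z}(O_1\times X)$ adapted to the decomposition $O_1\times X=\bigsqcup_j O_1\times O_j$: all of $O_1\times O_1$; the elements $(a_m,e_p^j)$ for $j\neq 1$, $0\leq m<o_1$ and $0<p<o_j$; the ``mixed'' difference cycles $\gamma_m^{j}:=(a_m,a_0^j)-(a_m,a_0^2)$ for $j=3,\ldots,k$ and $0\leq m<o_1$; and the diagonal sum $\sigma:=\sum_{m=0}^{o_1-1}(a_m,a_0^2)$, the last present only when $o_1$ is finite. A routine rank count confirms this spans the kernel.

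Next I would enumerate the relations coming from $\partial_3^R(x,y,z)=(x,z)-(x,y)-(x*y,z)+(x*z,y*z)$, split by which orbits contain $y$ and $z$. Whenever $y$ and $z$ lie in $O_1\cup O_j$ for one fixed $j\neq 1$, the analysis of Lemma \ref{Lemma 2.2} applies verbatim and produces the relations (I), (IIa), (IIb), (IIc), (III) for that pair of orbits; cumulatively, all cycles $(a_m,e_p^j)$ and $(e_m,a_n)$ collapse to one torsion class $\alpha=[(e_1,a_0)]\equiv[(a_0,e_1^j)]$ subject to $o_j\alpha\equiv 0$ for every $j$ (including $j=1$), while $[(a_0,a_0)]$ and $[\sigma]$ stay free. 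The genuinely new case for $k\geq 3$ is $y\in O_j$, $z\in O_l$ with $j\neq l$ both different from $1$. A direct expansion yields
\[
\partial_3^R(a_m,a_0^j,a_0^l)\;=\;\bigl(\gamma_{m+1}^{jl}-\gamma_m^{jl}\bigr)+(a_{m+1},e_1^j),
\]
where $\gamma_m^{jl}:=(a_m,a_0^j)-(a_m,a_0^l)$; modulo the earlier relations this becomes $(\gamma_{m+1}^{jl}-\gamma_m^{jl})+\alpha\equiv 0$. Interchanging the roles of $y$ and $z$ (legal precisely because both lie outside $O_1$) gives the companion relation $-(\gamma_{m+1}^{jl}-\gamma_m^{jl})+\alpha\equiv 0$. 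Summing the two produces the crucial new torsion relation $2\alpha\equiv 0$, while either one separately forces $\gamma_m^{jl}$ to depend on $m$ only through a shift by $\alpha$, so that it reduces to the single free class $[\gamma_0^{j}]$. A short check confirms that Case D2 with general $p,q$ reduces, modulo the Case D1 relations $(e_i,e_p^j)\equiv 0$, to the $p=q=0$ computation above, so no further relations appear.

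Assembling: the torsion part of $H_2^1(X)$ is generated by $\alpha$ with order $\gcd(2,o_1,\ldots,o_k)$, and the free part is spanned by $[(a_0,a_0)]$, $[\sigma]$, and $[\gamma_0^{j}]$ for $j=3,\ldots,k$, giving $1+1+(k-2)=k=|\mathcal{O}|$ free generators as claimed. The infinite-orbit adjustments are then routine: an infinite $O_j$ with $j\neq 1$ removes the wrap-around $o_j\alpha\equiv 0$ and thus drops $o_j$ from the gcd; an infinite $O_1$ additionally kills $\sigma$, losing one $\mathbb{Z}$ from the free part, and removes $o_1$ from the gcd. The principal obstacle I expect is purely organizational: cleanly isolating the symmetrization in Case D2 that produces $2\alpha\equiv 0$ without over-identifying the $\gamma_m^{jl}$, and verifying that Cases D1 and D2 at general $p,q$ introduce no hidden identifications that would shrink either the free rank or the claimed torsion order.
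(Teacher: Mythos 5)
Your proposal is correct and follows essentially the same route as the paper: the new ingredient for $k\geq 3$ is the relation from triples $(x,y,z)$ with $y,z$ in two distinct orbits both different from $O_1$, and your symmetrization $\partial_3^R\bigl((x,y,z)+(x,z,y)\bigr)$ producing $2\alpha\equiv 0$ is exactly the paper's computation, with the same free generators $[(a_0,a_0)]$, $[\sigma]$, and $[(a_0,a_0^j-a_0^2)]$ and the same treatment of the infinite-orbit cases. Your bookkeeping of the $\gamma_m^{jl}$ is slightly more explicit than the paper's, but the argument is the same.
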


\begin{proof} The first part is same as in the case of $k=2.$ For the general case, $k\geq 3,$ and in addition to the relations $(I)$, $(II)$, and $(III)$,
	we have to take into account the relations when $x,y,z$ are all in different orbits that is of type $(IV)$ given by,
	$\partial(x,y,z)$ where $x\in O_1$ $y \in O_i$ ($i \neq 1$), and $z \in O_j$ ($j \neq 1,i$). We have: $$\partial_3(x,y,z)= (x,z)-(x,y) - (f(x),z) + (f(x),f(y)) = (x,z-y) - (f(x), z-f(y).$$
	
	\begin{enumerate}
		\item[$(IV)$]{$(x,z-y) \equiv (f(x),z-f(y)).$}
	\end{enumerate}
	
	If we consider the role of $y$ and $z$ exchanged, that is, if we consider $\partial_3(x,z,y)$ we get: $$(x,y-z) \equiv (f(x),y-f(z).$$ Computing $\partial_3((x,y,z)+(x,z,x))$) we get:
	$$0\equiv (f(x),z-f(y)) + (f(x),y-f(z))\equiv (f(x),z-f(z)) +(f(x),y-f(y).$$ In particular $(f(x),f(y)-y)$ does not depend on the choice of $y\in X_2$ and hence is equivalent to $(f(x),e_1^{X_2}).$ By previous calculations, $(a_0,e_1^{X_2})$ is equivalent to $(e_1,a_0)$. Similarly $(f(x),f(z)-z)$ is equivalent to $(e_1,a_0)$. Thus our new relation gives $2(e_1,a_0)\equiv 0$.
	 
	Observe relation $(IV)$ $(f(x)-x,y-z) \equiv (f(x),f(z)-z).$ We already showed that $(f(x)-x,z-y)$ is equivalent to $(f(x),f(y)-y)$ and then  to $(a_0,e_1),$ which is annihilated by $2$ (and $o_1,...,o_k$). Thus, with our reductions, the free part is generated by $[(a_0,a_0)]$, $[(\sum_{i=0}^{o_i}(a_i,a_0^{2})]$, and $[(a_0,a_0^j-a_0^2)]$ for $j>2,$ while the finite part is generated by $(a_0,e_1)$ of order $gcd(2,o_1,...,o_k)$.
	
	The fact that there are no more relations essentially follows from our proof. The proof for the infinite case follows similarly. If $O_1$ is infinite we lose one $\mathbb{Z}$ in the free part of $H_2^1(X).$ The infinite sum $\sum_{i=0}^{o_1-1}(a_i,a_0^{2})$ is no more a chain.
	
\end{proof}

\subsection{Quandle co-cycle invariants and Abelian extensions of graphic quandles}
	
	In this subsection, we recall the initial motivation for quandle homology. We also discuss Abelian extensions of quandles.
	
	Let $D$ be a link diagram and $(X,*),$ a fixed quandle. A {\color{blue} {\bf quandle coloring}} of $D$ is a function $\mathfrak{C}:arc(D) \longrightarrow X,$ where $arc(D)$ denotes the set of arcs of $D$ and at each crossing of $D,$ the coloring rule described in Figure \ref{coloring} holds.
		
	\begin{figure}\label{coloring}
		\centering
		 \includegraphics[scale=0.5]{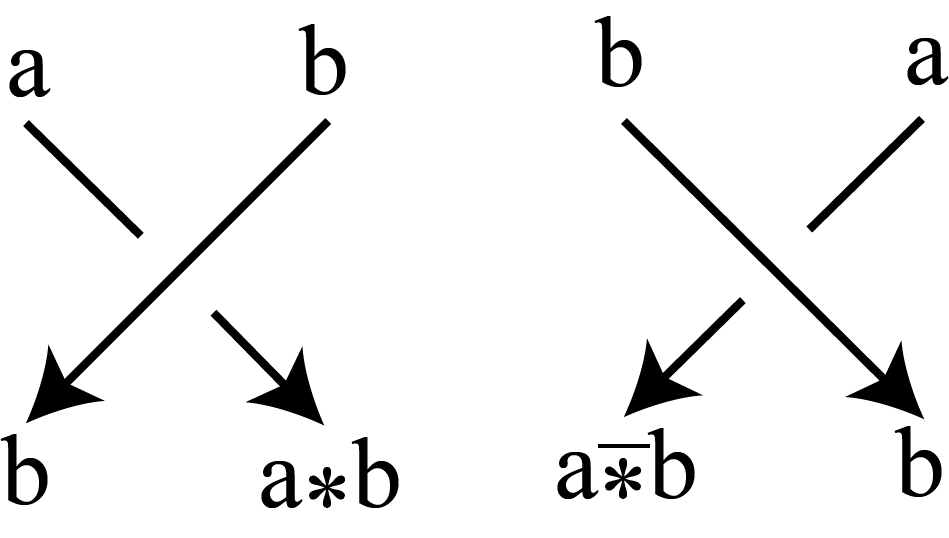}
		 \caption{Coloring rules of positive (on the left) and negative crossings}
	\end{figure}

	2-cocycles in quandle cohomology theory lead to, on one hand to 2-cocycle invariants for classical knots and links and on the other hand to extension of quandles.\footnote{Laver tables are examples of shelves which are not racks. The 2 and 3-cocycles of Laver tables (\cite{Lav}) were computed by P. Dehornoy and V. Lebed (\cite{DL}).} The idea of the 2-cocycle invariant is summarized in the following theorem.

	\begin{theorem}[Carter et al., \cite{CJKLS}]
	
	Let $\mathfrak{C}$ be a coloring of an oriented link diagram $L$ using a quandle $(X,*)$. Let $\phi$ be a 2-cocycle of $(X,*)$ with coefficients in an Abelian group $(A,+).$ Associate to each crossing $c$ of $L,$ a (Boltzmann) weight $\phi(a,b)^{\epsilon},$ where $a$ and $b$ are the tail of the arcs as in Figure \ref{coloring} and $\epsilon$ is $+1$ if the crossing $c$ is positive and $-1$ if it is negative. Then, the {\color{blue}{\bf 2-cocycle invariant}} is given by: $$\sum_{\mathfrak{C}} \prod_{c} \phi(x,y)^{\epsilon}$$ is a link invariant, where the sum is taken over all the colorings of $D$ with the quandle $(X,*)$.
	
	\end{theorem}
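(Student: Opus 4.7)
The plan is to show that the expression $\sum_{\mathfrak{C}} \prod_{c} \phi(x,y)^\epsilon$ is invariant under each of the three Reidemeister moves; since any two diagrams of a given link are related by a finite sequence of such moves, the theorem then follows by induction. The unifying strategy at each move is (i) to exhibit a bijection between the valid quandle colorings of the diagram before the move and those of the diagram after it, and (ii) to show that, under this bijection, the product of Boltzmann weights at the crossings affected by the move is the same on both sides. The global sum then matches term-by-term.

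First I would handle Reidemeister 2. Away from the two strands involved, the coloring is unchanged; the color of the intermediate arc created by the move is forced uniquely by the rack axioms, which gives the coloring bijection. The two new crossings have opposite signs and carry the same tail-label pair $(a,b)$, so their combined contribution is $\phi(a,b)^{+1}\phi(a,b)^{-1}=1$, independent of $\phi$. Next I would treat Reidemeister 3, which is the central case. With the three strands labeled $a,b,c$ at the bottom of the tangle, the coloring bijection between the two sides is afforded exactly by the right self-distributive axiom $(a*b)*c=(a*c)*(b*c)$ — this is the algebraic shadow of the move, and is why quandle colorings are well-defined in the first place. The difference of the two Boltzmann-weight products reduces to the expression $\phi(a,c)-\phi(a,b)+\phi(a*b,c)-\phi(a*c,b*c)$, which is precisely the 2-cocycle coboundary $\delta\phi(a,b,c)$ and vanishes because $\phi\in Z^2$.

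Reidemeister 1 is the step that uses the assumption that $\phi$ is a \emph{quandle} 2-cocycle and not merely a rack 2-cocycle. The kink produces a single new crossing in which an arc crosses itself, so the tail-label pair at that crossing is $(a,a)$ and its Boltzmann weight is $\phi(a,a)^{\pm 1}$. Since quandle cochains are defined on $C_2^R/C_2^D$, a quandle 2-cocycle vanishes on the degenerate subcomplex, giving $\phi(a,a)=0$ and hence a trivial contribution. The coloring bijection across the kink is forced by the idempotency axiom $a*a=a$, which guarantees that the color of the arc is unchanged when traversing the loop.

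The main obstacle will be the orientation and sign bookkeeping: Figure \ref{coloring} fixes which arc of a crossing carries which label and which crossings are counted as positive versus negative, and one must check every variant of each Reidemeister move against all possible orientations of the strands involved. In practice this reduces to a short finite list, and after normalizing to one ``base'' version of each move the remaining cases follow from symmetry together with the invertibility of the right action $*b$. Once these conventions are nailed down, the three verifications above are algebraically immediate.
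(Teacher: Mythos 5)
The paper does not prove this statement; it is quoted as a known theorem of Carter--Jelsovsky--Kamada--Langford--Saito and attributed to \cite{CJKLS}, so there is no internal proof to compare against. Your sketch is the standard argument from that source and is essentially correct: a weight-preserving bijection of colorings across each Reidemeister move, with R2 handled by cancellation of $\phi(a,b)^{+1}\phi(a,b)^{-1}$, R3 by the cocycle condition, and R1 by the vanishing of a \emph{quandle} cocycle on degenerate pairs $(a,a)$ together with idempotency.

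One small correction to fold into the bookkeeping you already flag: with the paper's differential $\partial_3(x,y,z)=(x,z)-(x,y)-(x*y,z)+(x*z,y*z)$, the coboundary is $\delta\phi(a,b,c)=\phi(a,c)-\phi(a,b)-\phi(a*b,c)+\phi(a*c,b*c)$, whereas you wrote $\phi(a,c)-\phi(a,b)+\phi(a*b,c)-\phi(a*c,b*c)$; the signs on the last two terms are off, and the expression as written is not $\pm\delta\phi$. The correct accounting is that one side of R3 contributes $\phi(a,b)+\phi(a*b,c)+\phi(b,c)$ and the other $\phi(a,c)+\phi(b,c)+\phi(a*c,b*c)$, whose difference is exactly $-\delta\phi(a,b,c)=0$. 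This is a transcription slip rather than a gap, and the rest of the argument (including the reduction of all oriented variants of the moves to base cases via invertibility of the right action) is sound.
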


	The 2-cocycle of group cohomology allows a group to be extended to obtain a larger group. The analogous idea works for quandle homology and quandles.
	
	\begin{theorem}[Carter et al., \cite{CJKLS}]
			Let $(X,*)$ be a quandle and $\phi$, a 2-cocycle with coefficients in an Abelian group $(A,+)$. Then, $A\times X$ can be given a quandle structure with $\circ: (A \times X) \times (A \times X) \longrightarrow A \times X$ given by, $$(a,x) \circ (b,y) = (a + \phi(x,y) , x*y),$$ for all $a,b \in A$ and $x,y \in X.$ 
	\end{theorem}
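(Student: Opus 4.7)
The plan is simply to verify the three quandle axioms for $(A\times X,\circ)$ by direct computation, extracting at each step exactly which algebraic property of $\phi$ is being used; the payoff is that the three axioms correspond precisely to (i) the degeneracy condition on $\phi$, (ii) the invertibility of right multiplication in $X$, and (iii) the $2$-cocycle condition. Throughout, I will tacitly assume $\phi$ is a \emph{quandle} 2-cocycle, i.e.\ it vanishes on the degenerate subcomplex ($\phi(x,x)=0$ for all $x\in X$) and satisfies $\delta^{2}\phi=0$, as these are the conditions built into the quandle cochain complex $C^{*}_{Q}$ used in Section~\ref{section 2}.

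First I would handle idempotency. A direct expansion gives
\[
(a,x)\circ(a,x) \;=\; \bigl(a+\phi(x,x),\,x*x\bigr) \;=\; (a,x),
\]
using $x*x=x$ (idempotency in $X$) together with $\phi(x,x)=0$. This is the point where the degenerate‐vanishing hypothesis on the quandle 2-cocycle enters; for a mere rack 2-cocycle this step would fail and one would only obtain a rack extension.

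Next I would check invertibility of right multiplication. For fixed $(b,y)$, I define
\[
(a',x')\,\bar\circ\,(b,y) \;:=\; \bigl(a'-\phi(x'\,\bar*\,y,\,y),\; x'\,\bar*\,y\bigr),
\]
and verify directly that $\bigl((a,x)\circ(b,y)\bigr)\,\bar\circ\,(b,y)=(a,x)$ and $\bigl((a',x')\,\bar\circ\,(b,y)\bigr)\circ(b,y)=(a',x')$; both reduce to the corresponding identities $(x*y)\bar*y=x$ and $(x'\bar*y)*y=x'$ in $X$ once the $A$-components are collected.

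The main computation is self-distributivity. Expanding both sides:
\begin{align*}
\bigl((a,x)\circ(b,y)\bigr)\circ(c,z)
&= \bigl(a+\phi(x,y)+\phi(x*y,\,z),\,(x*y)*z\bigr),\\
\bigl((a,x)\circ(c,z)\bigr)\circ\bigl((b,y)\circ(c,z)\bigr)
&= \bigl(a+\phi(x,z)+\phi(x*z,\,y*z),\,(x*z)*(y*z)\bigr).
\end{align*}
The $X$-components agree by self-distributivity in $X$. Equality of the $A$-components is
\[
\phi(x,y)+\phi(x*y,z) \;=\; \phi(x,z)+\phi(x*z,\,y*z),
\]
which is exactly the condition $\phi\bigl(\partial_{3}(x,y,z)\bigr)=0$ obtained by applying $\phi$ to the formula for $\partial_3$ recalled at the start of Section~\ref{section 2}. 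Thus the cocycle condition is not merely sufficient but in fact equivalent to self-distributivity of $\circ$, which is the aspect of the statement worth highlighting.

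The only real obstacle is bookkeeping: making sure that the sign/index convention for $\partial_{3}$ used in this paper matches the coboundary one uses to define a ``$2$-cocycle'', so that the identity extracted from self-distributivity is literally $\delta^{2}\phi=0$ and not a variant. Once that matching is recorded, the three axioms follow, and the projection $A\times X\to X$ is automatically a quandle homomorphism whose fibres are copies of the trivial quandle on $A$, justifying the name ``Abelian extension''.
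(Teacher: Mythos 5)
Your verification is correct, and it is the standard argument: the paper itself gives no proof of this statement (it is quoted from [CJKLS]), so a direct check of the three axioms is exactly what is called for, and your identification of the $A$-component of self-distributivity with $\phi(\partial_3(x,y,z))=0$ matches the paper's formula $\partial_3(x,y,z)=(x,z)-(x,y)-(x*y,z)+(x*z,y*z)$ sign for sign. The one point worth recording explicitly is the caveat you already flag: as literally stated, ``2-cocycle'' must be read as \emph{quandle} 2-cocycle (so that $\phi(x,x)=0$), since otherwise idempotency fails and one only obtains a rack on $A\times X$; your $\bar\circ$ formula and the cocycle computation are otherwise complete.
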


	We illustrate this by the following example of extending graphic quandles.
	
	\begin{proposition}
		
	Let $GQ(o_1\ | \ o_2 \ | \ ... \ | \ o_k)$ be the graphic quandle with $n$ orbits of size $o_1,o_2,...,o_k$ respectively. Define the 2-cocycle $\phi: GQ(o_1\ | \ o_2 \ | \ ... \ | \ o_k) \times GQ(o_1\ | \ o_2 \ | \ ... \ | \ o_k) \longrightarrow \mathbb{Z}_n,$ by $\phi(a,b) = 0$ if $a$ and $b$ are in the same orbits and $\phi(a,b) = 1$ otherwise. Then, the Abelian 2-cocycle extension of $GQ(o_1\ | \ o_2 \ | \ ... \ | \ o_k)$ by $\mathbb{Z}_n$ is isomorphic to $GQ(n \cdot o_1\ | \ n \cdot o_2 \ | \ ... \ | \ n \cdot o_k).$ In particular, if $o_1,o_2,...o_k = 1,$ we extend the trivial quandle of $k$ elements by $\mathbb{Z}_n$ and obtain $GQ(n \ | \ n \ | \ ... \ | \ n)$ with possible non-trivial torsion.
	
\end{proposition}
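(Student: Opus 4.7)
The plan is to prove the statement in three steps: verify that $\phi$ is a quandle $2$-cocycle, check that the resulting extension is graphic, and then exhibit an explicit isomorphism onto $GQ(n\cdot o_1 \mid \cdots \mid n\cdot o_k)$.

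For the cocycle check, I would exploit the fact that in the quandles of Example~\ref{graphic_quandle_example} the product $u*v$ always lies in the orbit of $u$. Consequently $\phi(u*v, w) = \phi(u,w)$ and $\phi(u*w, v*w) = \phi(u,v)$ for every triple $u,v,w$. Substituted into the rack $2$-cocycle identity $\phi(x,y) + \phi(x*y, z) = \phi(x,z) + \phi(x*z, y*z)$, both sides collapse to $\phi(x,y) + \phi(x,z)$, so the condition holds on the nose. The quandle degeneracy $\phi(x,x) = 0$ is immediate since $x$ and $x$ lie in a common orbit, so $\phi$ defines a quandle $2$-cocycle with values in $\mathbb{Z}_n$.

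To check that the extension $X_\phi := \mathbb{Z}_n \times X$ under $(a,x) \circ (b,y) = (a + \phi(x,y),\, x*y)$ is graphic, I would compute
\[
((a,x)\circ(b,y))\circ(a,x) = \bigl(a + \phi(x,y) + \phi(x*y, x),\; (x*y)*x\bigr).
\]
Since $x*y$ lies in the orbit of $x$, the second coordinate equals $x*y$ (the action inside an orbit is trivial) and $\phi(x*y, x) = 0$, which recovers $(a,x)\circ(b,y)$ as required by the graphic axiom.

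For the isomorphism I would proceed orbit by orbit. On the block $\mathbb{Z}_n \times O_i$, elements outside this block act on the right by $(a, x_p^i) \mapsto (a+1, x_{p+1}^i)$, while elements inside act trivially; on the target side the $i$-th orbit $O_i'$ of $GQ(n o_1 \mid \cdots \mid n o_k)$ is acted on cyclically by the $(n o_i)$-cycle $f_i'$. So I would fix a basepoint $(0, x_0^i)$, send it to $x_0^{i\,\prime}$, and extend equivariantly under the shift. Checking that the resulting map $\Psi : X_\phi \to GQ(n o_1 \mid \cdots \mid n o_k)$ intertwines $\circ$ with $*$ then reduces to the two cases (same orbit, different orbit) already tabulated while computing the product.

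The main subtlety is an orbit-size count: the shift $(a,p)\mapsto(a+1,p+1)$ on $\mathbb{Z}_n\times\mathbb{Z}_{o_i}$ has order $\mathrm{lcm}(n,o_i)$, so the identification of $\mathbb{Z}_n\times O_i$ with a single $(n o_i)$-cycle requires $\gcd(n,o_i)=1$, whereupon the Chinese remainder theorem supplies an explicit bijection $\mathbb{Z}_n\times\mathbb{Z}_{o_i}\cong\mathbb{Z}_{n o_i}$. This hypothesis is automatic in the highlighted special case $o_1=\cdots=o_k=1$, which is the cleanest instance of the construction and the one in which the torsion phenomenon is most transparent.
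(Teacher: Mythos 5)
The paper states this proposition without proof, so there is nothing to compare your argument against; on its own terms your write-up is correct and complete, and the cocycle verification, the graphic-axiom check, and the orbit-by-orbit construction of the isomorphism are exactly the computations one would need. The key observation driving everything --- that in Example~\ref{graphic_quandle_example} the product $u*v$ stays in the orbit of $u$, so $\phi$ is constant under all the substitutions appearing in the $2$-cocycle identity --- is the right one and makes the cocycle condition trivial.

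More importantly, the ``subtlety'' you flag at the end is not a subtlety but a genuine gap in the statement itself, and you are right to insist on it. Right multiplication by any element outside the block $\mathbb{Z}_n\times O_i$ acts on that block by the simultaneous shift $(a,p)\mapsto(a+1,p+1)$, while elements inside the block act trivially, so the block decomposes into $\gcd(n,o_i)$ orbits of size $\mathrm{lcm}(n,o_i)$. Since an isomorphism of quandles must match orbits bijectively and $GQ(n\cdot o_1\mid\cdots\mid n\cdot o_k)$ has exactly $k$ orbits (its structure maps being full $(n\cdot o_i)$-cycles by the paper's convention), the claimed isomorphism can only hold when $\gcd(n,o_i)=1$ for every $i$. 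A minimal counterexample to the unrestricted statement is $n=2$, $o_1=o_2=2$: the extension has eight elements in four orbits of size two, whereas $GQ(4\mid 4)$ has two orbits of size four. Your coprimality hypothesis, together with the Chinese remainder identification $\mathbb{Z}_n\times\mathbb{Z}_{o_i}\cong\mathbb{Z}_{n\cdot o_i}$ intertwining the shift with $+1$, is exactly what repairs the proposition; and as you note it holds automatically in the special case $o_1=\cdots=o_k=1$ that the authors highlight.
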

	
	\section{Odds and ends}
	
	In this paper, we concentrated on a specific family of graphic quandles. Firstly, the choice of functions from the appropriate permutation groups decide a lot about the structure of the quandle. The only condition these elements have to satisfy for a given orbit is that they should commute pairwise. In this article, we restricted our choices to same cyclic permutations for every orbit of the quandle (mostly, because we wanted to understand completely the second rack homology of these quandles). So far, we have been unable to construct a finite graphic quandle which is not part of the family introduced in Example \ref{graphic_quandle_example}. We checked that there is no such example up to order six.
	
	Graphic shelves remain unexplored. In particular, as we focused on quandles, we did not consider the one term homology of graphic shelves and graphic spindles which are not racks. Additionally, we did not consider multi term homology for distributive sets.
	
	The main example of graphic quandles we constructed in somewhat similar to the construction of $f$-block spindles introduced in \cite{CPP}. In particular, these spindles are proven to be very rich from one term homology point of view. It is a natural to ask if there is some similarity between these.  
	
	The main construction used in this paper for graphic quandles can be generalized to construct biracks and biquandles as well. However, in this paper since we concentrate on the rack and quandle homology of self distributive algebraic structures, we do not discuss this notions further.
				
	Based on our preliminary computational data, we propose the following conjectures.
	{\scriptsize
	\begin{table}[h]
		\caption{The graphic quandles $GQ( Id ,(0 \ 1 \ 2 \ 3),(0 \ 2)(1 \ 3) \mid (4 \ 6)(5 \ 7),Id,(4 \ 5 \ 6 \ 7) \mid (8 \ 9 \ 10 \ 11),(8 \ 10)(9 \ 11),Id)$ (on the left) and $GQ( Id,(0 \ 1 \ 2 \ 3),(0 \ 2)(1 \ 3) \mid (4 \ 5 \ 6 \ 7),Id,(4 \ 6)(5 \ 7) \mid (8 \ 9 \ 10 \ 11),(8 \ 10)(9 \ 11), Id).$}
		\label{GQ}
		\begin{minipage}{0.5\textwidth}
			\centering
			\begin{tabular}{c || c c c c | c c c c | c c c c}
				*&0&1&2&3&4&5&6&7&8&9&10&11 \\
				\hline \hline
				0&0&0&0&0&1&1&1&1&2&2&2&2\\
				1&1&1&1&1&2&2&2&2&3&3&3&3\\
				2&2&2&2&2&3&3&3&3&0&0&0&0\\
				3&3&3&3&3&0&0&0&0&1&1&1&1\\	
				\hline
				4&6&6&6&6&4&4&4&4&5&5&5&5\\
				5&7&7&7&7&5&5&5&5&6&6&6&6\\
				6&4&4&4&4&6&6&6&6&7&7&7&7\\
				7&5&5&5&5&7&7&7&7&4&4&4&4\\
				\hline
				8&9&9&9&9&10&10&10&10&8&8&8&8\\
				9&10&10&10&10&11&11&11&11&9&9&9&9\\
				10&11&11&11&11&8&8&8&8&10&10&10&10\\
				11&8&8&8&8&9&9&9&9&11&11&11&11\\
			\end{tabular}
		\end{minipage}
		\hfillx
		\begin{minipage}{0.5\textwidth}
			\centering
			\begin{tabular}{c || c c c c | c c c c | c c c c}
				*&0&1&2&3&4&5&6&7&8&9&10&11 \\
				\hline \hline
				0&0&0&0&0&1&1&1&1&2&2&2&2\\
				1&1&1&1&1&2&2&2&2&3&3&3&3\\
				2&2&2&2&2&3&3&3&3&0&0&0&0\\
				3&3&3&3&3&0&0&0&0&1&1&1&1\\	
				\hline
				4&5&5&5&5&4&4&4&4&6&6&6&6\\
				5&6&6&6&6&5&5&5&5&7&7&7&7\\
				6&7&7&7&7&6&6&6&6&4&4&4&4\\
				7&4&4&4&4&7&7&7&7&5&5&5&5\\
				\hline
				8&9&9&9&9&10&10&10&10&8&8&8&8\\
				9&10&10&10&10&11&11&11&11&9&9&9&9\\
				10&11&11&11&11&8&8&8&8&10&10&10&10\\
				11&8&8&8&8&9&9&9&9&11&11&11&11\\
			\end{tabular}
		\end{minipage}
	\end{table}
	}
	\begin{conjecture}
		Let $GQ(o_1\ | \ o_2 \ | \ ... \ | \ o_k)$ be the graphic quandle with $k$ orbits of size $o_1,o_2,...,o_k$ respectively, with at least on orbit having more than two elements. Let $d = gcd(o_1,o_2,...,o_k).$ Then, if $d \neq 1,$ $\mathbb{Z}_d \subseteq H_n^Q(GQ(o_1\ | \ o_2 \ | \ ... \ | \ o_k)),$ when $n \geq k$ and $\mathbb{Z}_d \nsubseteq H_{n-1}^Q(GQ(o_1\ | \ o_2 \ | \ ... \ | \ o_k)),$ when $n < k.$
	\end{conjecture}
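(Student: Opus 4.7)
The plan is to extend the orbit-filtration arguments behind Lemmas \ref{Lemma 2.2} and \ref{210} to all homological degrees and to combine them with an explicit construction of torsion cycles. Writing the quandle chain complex as $C_n^Q(X)=\bigoplus_{\underline{i}} C_n^{\underline{i}}(X)$, where $\underline{i}=(i_1,\dots,i_n)$ records the orbit of each coordinate, the key observation is that $x_j * x_i$ lies in the orbit of $x_j$, so the differential preserves the orbit signature up to deletion of one entry. In particular, the number $|\Sigma(\underline{i})|$ of distinct orbits appearing is weakly decreasing under $\partial$, which gives a filtration $F^p C_n^Q=\bigoplus_{|\Sigma(\underline{i})|\le p}C_n^{\underline{i}}(X)$. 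This filtration is the central technical tool for both halves of the conjecture.

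For the non-existence at low degrees ($\mathbb{Z}_d\nsubseteq H^Q_m$ for $m\le k-2$), every $m$-chain uses at most $m\le k-2$ distinct orbits, so on the associated graded each torsion class has annihilator depending only on the greatest common divisors of proper sub-collections of $\{o_1,\dots,o_k\}$, together with the factor $2$ coming from the type-(IV) relation whenever three or more orbits appear. Under the hypothesis that at least one $o_i>2$ and $d\ne 1$, the combination of these local annihilators with the extension data along the filtration cannot assemble a full $\mathbb{Z}_d$ subgroup; a spectral-sequence argument on $E_1$ should make this precise.

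For the existence at $n\ge k$, the crucial case is $n=k$. Lemma \ref{210} showed that in degree $2$ the type-(IV) relation reduces the would-be $\mathbb{Z}_d$ torsion to $\mathbb{Z}_{\gcd(2,d)}$; the conjecture predicts that in degree $k$ there is enough room to antisymmetrize over orbit labels so that the type-(IV)-style reductions cancel by sign. Concretely, I would look for a cycle $\omega_k$ supported on full-signature chains (those with $|\Sigma|=k$) and antisymmetric under permutation of orbit labels, for which $d\cdot[\omega_k]=0$ via an explicit boundary built from the permutations $f_j$, mirroring the telescoping $o_1(e_1,a_0)\equiv 0$ used in Lemma \ref{Lemma 2.2}. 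The order of $[\omega_k]$ is shown to equal $d$ by pairing with an explicit $\mathbb{Z}_d$-valued $k$-cocycle, obtained by iterating the $2$-cocycle $\phi$ from the Abelian-extension proposition immediately preceding the conjecture. For $n>k$, the class is propagated by crossing with free generators of $H^R_{n-k}(X)$ and invoking the Przytycki--Putyra K\"unneth-type formula from Section \ref{section 2}.

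The main obstacle will be the cocycle construction at degree $k$: quandle cohomology carries no off-the-shelf cup product, so the required $\psi_k$ must be built directly, for instance as an antisymmetrized $k$-fold product of copies of $\phi$, with its cocycle identity verified against the graphic axiom. A secondary subtlety is that the orbit-count filtration is cleanest on the rack complex while the statement concerns the quandle complex, so I would carry out all torsion computations at the rack level and transfer the conclusions via the Litherland--Nelson splitting $H_n^R=H_n^Q\oplus H_n^D$.
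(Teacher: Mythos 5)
The statement you are trying to prove is presented in the paper as a \emph{conjecture}: the authors offer no proof, only the computational evidence in their final table (torsion first appearing in degree $n=k$ for several three- and four-orbit examples). There is therefore no argument in the paper to compare yours against, and your proposal has to stand on its own. As written it is a research plan rather than a proof: the two objects on which everything hinges --- the antisymmetric cycle $\omega_k$ supported on full-signature chains and the $\mathbb{Z}_d$-valued $k$-cocycle $\psi_k$ detecting it --- are never constructed, and you yourself flag that quandle cohomology carries no cup product with which to assemble $\psi_k$ from the $2$-cocycle $\phi$. Until those are exhibited and the cocycle identity is verified, the existence half ($n\ge k$) is not established even in the base case $n=k$.

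Two of the intermediate steps are also flawed as stated. For the non-existence half, the gcd of any proper sub-collection of $\{o_1,\dots,o_k\}$ is a \emph{multiple} of $d$, so $\mathbb{Z}_{\gcd(o_{i_1},\dots,o_{i_m})}$ \emph{contains} $\mathbb{Z}_d$; the ``local annihilators'' on the associated graded therefore do not by themselves exclude a $\mathbb{Z}_d$ subgroup. In the paper's own degree-$2$ computation (Lemma \ref{210}) it is precisely the cross-orbit type-(IV) relations that cut the two-orbit torsion $\mathbb{Z}_{\gcd(o_1,o_2)}$ down to $\mathbb{Z}_{\gcd(2,o_1,\dots,o_k)}$, and you would need an analogue of that cancellation in every degree $2<n<k$ (degrees the paper never computes), not merely a filtration bound. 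For the propagation to $n>k$, the Przytycki--Putyra formula expresses $H_n^R$ in terms of the groups $H_q^Q$ with $q\le n$; it runs in the wrong direction to manufacture a class in $H_n^Q$ from one in $H_k^Q$, so at best you would conclude $\mathbb{Z}_d\subseteq H_n^R$ for $n\ge k$, which is strictly weaker than the conjectured statement about $H_n^Q$.
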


	\begin{conjecture}
		Let $GQ(o \mid o)$ be the graphic quandle with $2o$ elements divided in two orbits of equal size. Then, $$tor H^Q_n(GQ(o \mid o))= \mathbb{Z}_o^{c_n}, $$ where $c_n$ is given as follows for $n \in \mathbb{Z}^{+}$. $$c_0  = 0 = c_1, \ c_{2n}=2c_{2n-1}+2, \ and \ c_{2n+1} = 2c_{2n}.$$ In closed form, we have $c_{2n} = 2\frac{4^n-1}{3},$ and $c_{2n+1} = 4\frac{4^n-1}{3}.$
		
	\end{conjecture}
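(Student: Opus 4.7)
The plan is to extend the orbit-pattern decomposition of Lemmas \ref{Lemma 2.2} and \ref{210} to all homological degrees, then read off the $\mathbb{Z}_o$-torsion of $H^Q_n$ inductively. Since the right quandle operation in $GQ(o\mid o)$ preserves orbits, and more strongly preserves orbit patterns on tuples, the rack chain complex splits as
$$C_n^R(GQ(o\mid o))=\bigoplus_{s\in\{1,2\}^n} C_n^s,$$
where $C_n^s=\mathbb{Z}(O_{s_1}\times\cdots\times O_{s_n})$ and the boundary preserves each $C_\bullet^s$. I would work in the ``delta'' basis $\{a_0^i,e_1^i,\ldots,e_{o-1}^i\}$ on each orbit $O_i$, so that the shift $f$ cyclically permutes $e_1^i,\ldots,e_{o-1}^i$ (with $f(e_{o-1}^i)=-(e_1^i+\cdots+e_{o-1}^i)$) and sends $a_0^i$ to $a_0^i+e_1^i$. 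Relative to this basis, $\partial_n$ on $C_n^s$ is a signed sum of face maps dressed with shift corrections $(f-1)$ applied on every coordinate whose orbit label differs from that of the deleted coordinate; this is the direct higher-degree analogue of relations $(I)$--$(IV)$ in the proof of Lemma \ref{Lemma 2.2}.

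Second, for each $s$ I expect $C_\bullet^s$ to decompose, after a change of basis, as a tensor product of trivial $\mathbb{Z}$ factors (associated to $a_0^i$ coordinates) and copies of the two-term complex
$$\mathbb{Z}[t]/(t^o-1)\xrightarrow{\,t-1\,}\mathbb{Z}[t]/(t^o-1),$$
whose homology is $\mathbb{Z}\oplus\mathbb{Z}_o$. A K\"unneth-style argument would then identify the $\mathbb{Z}_o$-summands of $H_*(C_\bullet^s)$ with certain ``transition subsets'' of $s$. Summing over $s\in\{1,2\}^n$ yields $H_n^R(GQ(o\mid o))$ explicitly, after which the Litherland--Nelson splitting $H_n^R=H_n^Q\oplus H_n^D$ combined with the Przytycki--Putyra expression of $H_n^D$ in terms of $\{H_k^Q,H_k^R\}_{k<n}$ extracts $\mathrm{tor}(H_n^Q)$. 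A joint induction tracking all of $H_n^R$, $H_n^Q$, and $H_n^D$ would verify the recurrence $c_{2n}=2c_{2n-1}+2$ and $c_{2n+1}=2c_{2n}$ at each step.

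The hard part will be matching the combinatorial count of surviving $\mathbb{Z}_o$-summands with this recurrence. The pure doubling $c_{2n+1}=2c_{2n}$ should correspond to the two choices of orbit label for the newly appended coordinate, each inheriting the torsion structure of the preceding degree. The $+2$ correction in $c_{2n}=2c_{2n-1}+2$ is expected to reflect two new classes coming from the fully alternating patterns $(1,2,1,2,\ldots)$ and $(2,1,2,1,\ldots)$, generalizing the torsion class $[(e_1,a_0)]\equiv[(a_0,e_1^2)]$ of order $o$ found at $n=2$. The most delicate point is controlling the Przytycki--Putyra contributions to $\mathrm{tor}(H_n^D)$ so that cross-terms and Tor-terms are correctly subtracted from $\mathrm{tor}(H_n^R)$; monitoring torsion in all lower $H_k^Q$ simultaneously should make this tractable, and the geometric-sum closed form $c_{2n}=2(4^n-1)/3=2\sum_{i=0}^{n-1}4^i$ suggests that exactly $2\cdot 4^{n-1}$ new $\mathbb{Z}_o$ summands enter at each even degree, providing an independent check at every inductive step.
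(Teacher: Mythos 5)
Be aware first that the paper gives no proof of this statement: it is posed explicitly as a conjecture supported only by computational data (only the cases $n=1,2$ are actually established, via Lemma \ref{Lemma 2.2} and the Litherland--Nelson splitting). So your text can only be judged as a standalone strategy, and as such it has a genuine gap at its very first step. The rack chain complex of $GQ(o\mid o)$ does \emph{not} split as $\bigoplus_{s\in\{1,2\}^n}C_n^s$ with the boundary preserving the summands. The differential $\partial_n$ deletes the $i$-th coordinate for each $i\geq 2$, so a generator with orbit pattern $(s_1,\dots,s_n)$ is sent to generators whose patterns are the shorter tuples $(s_1,\dots,\widehat{s_i},\dots,s_n)$; distinct patterns in degree $n$, for instance $(1,1,2)$ and $(1,2,2)$, both land in the span of the pattern $(1,2)$ in degree $2$, so the pattern summands are not subcomplexes, and for a fixed $s$ of length $n$ the symbol ``$C^s_\bullet$'' does not name a chain complex at all. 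The only splitting actually available --- and the only one the paper uses --- is by the orbit of the \emph{first} coordinate, $C_*^R=\bigoplus_i C_*^{O_i}$, which works precisely because $x_1$ is never deleted. To salvage your plan you would need to replace the direct sum by a filtration (say by the number of orbit transitions in the pattern) and control the resulting spectral sequence; you have not specified such a filtration, shown it is compatible with $\partial$, or argued degeneration, and this is where the real difficulty of the conjecture lives.

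Even granting a corrected decomposition, the rest of the argument is a chain of unverified expectations: the identification of each piece with tensor powers of $\mathbb{Z}[t]/(t^o-1)\xrightarrow{t-1}\mathbb{Z}[t]/(t^o-1)$ is asserted (``I expect''), the K\"unneth bookkeeping over $\mathbb{Z}$ produces Tor terms $\mathrm{Tor}(\mathbb{Z}_o,\mathbb{Z}_o)=\mathbb{Z}_o$ that you do not count, and extracting $\mathrm{tor}(H_n^Q)$ from $\mathrm{tor}(H_n^R)$ via Przytycki--Putyra requires simultaneous control of the free parts of all lower $H_k^Q$ and $H_k^R$, which for infinite or even moderately sized $o$ is exactly the content of the conjecture. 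Your heuristic accounting for the recurrence (doubling from the appended coordinate, the $+2$ from alternating patterns) is a plausible reading of the closed form $c_{2n}=2(4^n-1)/3$, and consistent with $c_2=2$ from Lemma \ref{Lemma 2.2}, but none of it is derived from the chain complex. In short: this is a reasonable research plan whose first structural claim is false as stated, not a proof.
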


		We next introduce a standard notation for graphic quandles like the ones shown in Table \ref{GQ} to tabulate computational data. Let $S$ be a set with $n$ elements. Let $S_n(S)$ denote the permutation group $S_n$ with its elements denoted using the elements of $S.$ Then, by $GQ( f_{1,1},f_{1,2}, ..., f_{1,k} \mid f_{2,1},f_{2,2}, ..., f_{2,k} \mid \cdots \mid f_{k,1},f_{k,2}, ..., f_{k,k} ),$ we denote the graphic quandle with orbits $O_1,O_2,...,O_k$ having $o_1,o_2,...o_k$ elements respectively with $f_{i,j} \in S_{o_i}(O_i)$. 
		
		For example, the graphic quandle on the left hand side of Table \ref{GQ} has three orbits: $\{0,1,2,3\},\{4,5,6,7\},$ and $\{8,9,10,11\}.$ It is denoted as:
		 $$GQ( Id_{S_{o_1}(O_1)} ,(0 \ 1 \ 2 \ 3),(0 \ 2)(1 \ 3) \mid (4 \ 6)(5 \ 7),Id_{S_{o_2}(O_2)},(4 \ 5 \ 6 \ 7) \mid (8 \ 9 \ 10 \ 11),(8 \ 10)(9 \ 11),Id_{S_{o_3}(O_3)} ).$$ The graphic quandle on the right hand side of Table \ref{GQ} is denoted by: 
		 $$GQ( Id_{S_{o_1}(O_1)} ,(0 \ 1 \ 2 \ 3),(0 \ 2)(1 \ 3) \mid (4 \ 5 \ 6 \ 7),Id_{S_{o_2}(O_2)},(4 \ 6)(5 \ 7) \mid (8 \ 9 \ 10 \ 11),(8 \ 10)(9 \ 11), Id_{S_{o_3}(O_3)} ).$$ The following table consists of some graphic quandles. A `?' symbol is used when a particular entry is beyond the scope of our present computational abilities.
{\footnotesize
		\begin{table}[h]\label{data}
		
		\centering
		\caption{The finite subgroups of some graphic quandles}
		\begin{tabular}{||c|c|c|c|c||}
			\hline 
			Quandle $(X,*)$&$H_1^R$&$H_2^R$&$H_3^R$&$H_4^R$\\
			\hline
			$GQ(Id,(0 \ 1),(0 \ 1) \mid (2 \ 3),Id,(2 \ 3) \mid (4 \ 5),(4 \ 5),Id)$&1&{\color{blue}$\mathbb{Z}_2^3$}&{\color{blue}$\mathbb{Z}_2^{15}$}&{\color{blue}$\mathbb{Z}_2^{75}$}\\
			\hline
			$GQ( Id,(0 \ 1 \ 2),(0 \ 1 \ 2) \mid (3 \ 4 \ 5),Id,(3 \ 4 \ 5) \mid (6 \ 7 \ 8),(6 \ 7 \ 8),Id)$&1&1&{\color{blue}$\mathbb{Z}_3^3$}&?\\
			\hline
			$GQ(Id,(0 \ 1),(0 \ 1),(0 \ 1) \mid (2 \ 3),Id,(2 \ 3),(2 \ 3) \mid (4 \ 5),(4 \ 5),Id,(4 \ 5) \mid (6 \ 7),(6 \ 7),(6 \ 7),Id)$&1&1&1&{\color{blue}$\mathbb{Z}_2^4$}\\
			\hline
			$GQ( Id ,(0 \ 1 \ 2 \ 3),(0 \ 2)(1 \ 3) \mid (4 \ 6)(5 \ 7),Id,(4 \ 5 \ 6 \ 7) \mid (8 \ 9 \ 10 \ 11),(8 \ 10)(9 \ 11),Id )$&1&1&{\color{blue} $\mathbb{Z}_4^3$}&?\\
			\hline
			$GQ( Id,(0 \ 1 \ 2 \ 3),(0 \ 2)(1 \ 3) \mid (4 \ 5 \ 6 \ 7),Id,(4 \ 6)(5 \ 7) \mid (8 \ 9 \ 10 \ 11),(8 \ 10)(9 \ 11),Id )$&1&{\color{blue} $\mathbb{Z}_2^3$}&{\color{blue} $\mathbb{Z}_2^6$}$\oplus${\color{blue} $\mathbb{Z}_4^3$}&?\\
			\hline
		\end{tabular}
	\end{table}
}

	Note the presence of $\mathbb{Z}_3$ torsion in $H_3^R(X)$ of the graphic quandle in the second row of the above table. We checked that the torsion subgroup $\mathbb{Z}_i$ is present in $H_3^R(X)$ of graphic quandles having three orbits of equal size $i,$ with the binary operation given similarly for $3<i<7$.
	
	\section*{Acknowledgements}
	
		The first author would like to thank the organizers of the `Fourth Mile High Conference on Nonassociative Mathematics' that sowed the seeds for this article. The second author was partially supported by the Simons Collaboration Grant for Mathematicians-316446 and the CCAS Dean's Research Chair award.
	
	\bibliographystyle{plain}

\begin{thebibliography}{99}
		
		\bibitem[{\color{blue}Car}]{Car} J.~S.~Carter,
		A survey of quandle ideas. Introductory lectures on knot theory, 22-53, Ser. Knots Everything, 46, World Sci. Publ., Hackensack, NJ, 2012.
		e-print: \url{https://arxiv.org/abs/1002.4429}
				
		\bibitem[{\color{blue}CJKLS}]{CJKLS} J.~S.~Carter, D.~Jelsovsky, S.~Kamada, L.~Langford, M.~Saito,
		Quandle Cohomology and State-sum Invariants of Knotted Curves and Surfaces. {\it Trans. Amer. Math. Soc.} 355 (2003), no. 10, 3947-3989.
		e-print: \url{http://arxiv.org/abs/math/9903135}
		
		\bibitem[{\color{blue}CKS}]{CKS} J.~S.~Carter, S.~Kamada, M.~Saito,
		Surfaces in $4$-space. Encyclopaedia of Mathematical Sciences, 142, Low-Dimensional Topology, III. Springer-Verlag, Berlin, 2004.
		
		\bibitem[{\color{blue}Cla}]{Cla} F.~J.~B.~J.~Clauwens,
		The algebra of rack and quandle cohomology. {\it J. Knot Theory Ramifications}, 20(11), 2011, 1487-1535.
		e-print: \url{https://arxiv.org/abs/1004.4423}
		
		\bibitem[{\color{blue}CMP}]{CMP} A.~S.~Crans, S.~Mukherjee, J.~H.~Przytycki,
		On homology of associative shelves. {\it J. Homotopy Relat. Struct.} 12 (2017), no. 3, 741-763.
		e-print: \url{http://arxiv.org/abs/1603.08590}
		
		\bibitem[{\color{blue}CPP}]{CPP} A.~S.~Crans, J.~H.~Przytycki, K.~K.~Putyra,
		Torsion in one-term distributive homology. {\it Fund. Math.} 225 (2014), no. 1, 75-94.
		e-print: \url{https://arxiv.org/abs/1306.1506}
		
		\bibitem[{\color{blue}Deh}]{Deh} P.~Dehornoy,
		Braids and self-distributivity. Progress in Mathematics, 192. Birkh\"{a}user Verlag, Basel, 2000. xx+623 pp.
		
		\bibitem[{\color{blue} DL}]{DL} P.~Dehornoy, V.~Lebed,
		Two- and three-cocycles for Laver tables. {\it J. Knot Theory Ramifications} 23 (2014), no. 4, 1450017, 30 pp.
		e-print: \url{https://arxiv.org/abs/1401.2335}
		
		\bibitem[{\color{blue}EG}]{EG} P.~Etingof, M.~Gra\~{n}a,
		On rack cohomology. {\it J. Pure Appl. Algebra} 177 (2003), no. 1, 49-59.
		e-print: \url{https://arxiv.org/abs/math/0201290}
		
		\bibitem[{\color{blue}FR}]{FR} R.~Fenn, C.~Rourke,
		Letter. e-print: \url{https://www.maths.sussex.ac.uk/Staff/RAF/Maths/history1.jpg}
		
		\bibitem[{\color{blue}FRS1}]{FRS1} R.~Fenn, C.~Rourke, B.~Sanderson, 
		An introduction to species and the rack space. {\it M.E.Bozhuyuk (ed), Topics in Knot Theory (Proceedings of the Topology Conference, Erzurum), NATO Adv. Sci. Inst. Ser. C. Math. Phys. Sci.}, 399, Kluver Academic Publishers, 33-35, 1993.
		
		\bibitem[{\color{blue}FRS2}]{FRS2} R.~Fenn, C.~Rourke, B.~Sanderson, 
		Trunks and classifying spaces. {\it Applied Categorical Structures}, 3, 1995, 321-356.
		
		\bibitem[{\color{blue}FRS3}]{FRS3} R.~Fenn, C.~Rourke, B.~Sanderson, 
		James Bundles and Applications. preprint, 1996
		e-print: \url{https://arxiv.org/abs/math/0301354}
		
		\bibitem[{\color{blue}Joy}]{Joy} D.~Joyce,
		A classifying invariant of knots: the knot quandle, {\it J. Pure Appl. Algebra}, 23, 1982, 37-65.
		
		\bibitem [{\color{blue}JPSZ}]{JPSZ} P.~Jedli\v{c}ka, A.~Pilitowska, D~Stanovsk\'{y}, A.~Zamojska-Dzienio,
		Subquandles of affine quandles.
		e-print: \url{https://arxiv.org/abs/1708.02362}
		
		\bibitem[{\color{blue}Lav}]{Lav} R.~Laver,
		On the algebra of elementary embeddings of a rank into itself. {\it Adv. Math.} 110 (1995), no. 2, 334-346.
		
		\bibitem[{\color{blue}Law}]{Law} F.~W.~Lawvere,
		Qualitative distinctions between some toposes of generalized graphs. Categories in computer science and logic (Boulder, CO, 1987), 261-299, {\it Contemp. Math}., 92, Amer. Math. Soc., Providence, RI, 1989.
		
		\bibitem[{\color{blue}LN}]{LN} R.~A.~Litherland, S.~Nelson,
		The Betti numbers of some finite racks. {\it J. Pure Appl. Algebra} 178 (2003), no. 2, 187-202.
		e-print: \url{https://arxiv.org/abs/math/0106165}
		
		\bibitem [{\color{blue}Mat}]{Mat} S.~Matveev,
		Distributive groupoids in knot theory, {\it Matem. Sbornik}, 119(161)(1), 78-88, 1982 (in Russian); English Translation in {\it Math. USSR-Sbornik}, 47(1), 73-83, 1984.
		
		\bibitem[{\color{blue}Muk}]{Muk} S.~Mukherjee,
		A homology theory for a special family of semigroups. {\it J. Knot Theory Ramifications} (to appear).
		e-print: \url{https://arxiv.org/abs/1611.05555}
		
		\bibitem[{\color{blue}Nos1}]{Nos1} T.~Nosaka,
		On quandle homology groups of Alexander quandles of prime order. {\it Trans. Amer. Math. Soc.} 365 (2013), no. 7, 3413-3436.
		
		\bibitem[{\color{blue}Nos2}]{Nos2} T.~Nosaka,
		Quandles and Topological Pairs, Symmetry, Knots, and Cohomology, Springer, 2017.
				
		\bibitem[{\color{blue}NP1}]{NP1} M.~Niebrzydowski, J.~H.~Przytycki,
		Homology of dihedral quandles. {\it J. Pure Appl. Algebra} 213 (2009), no. 5, 742-755.
		e-print: \url{https://arxiv.org/abs/math/0611803}
		
		\bibitem[{\color{blue}NP2}]{NP2} M.~Niebrzydowski, J.~H.~Przytycki,
		Homology operations on homology of quandles. {\it J. Algebra} 324 (2010), no. 7, 1529-1548.
		e-print: \url{https://arxiv.org/abs/0907.4732}
				
		\bibitem[{\color{blue}NP3}]{NP3} M.~Niebrzydowski, J.~H.~Przytycki,
		The second quandle homology of the Takasaki quandle of an odd abelian group is an exterior square of the group. {\it J. Knot Theory Ramifications} 20 (2011), no. 1, 171-177.
		e-print: \url{https://arxiv.org/abs/1006.0258}
		
		\bibitem[{\color{blue}PP}]{PP} J.~H.~Przytycki, K.~K.~Putyra,
		The degenerate distributive complex is degenerate. Eur. J. Math. 2 (2016), no. 4, 993-1012.
		e-print: \url{https://arxiv.org/abs/1411.5905}
		
		\bibitem[{\color{blue}Prz}]{Prz} J.~H.~Przytycki,
		Distributivity versus associativity in the homology theory of algebraic structures. {\it Demonstratio Mathematica}, 44(4), 2011, pp. 823-869.
		e-print: \url{http://arxiv.org/abs/1109.4850}
		
		\bibitem[{\color{blue}PV}]{PV}  J.~D.~Phillips, P.~Vojt\v{e}chovsk\'{y},
		The varieties of loops of Bol-Moufang type. {\it Algebra Universalis} 54 (2005), no. 3, 259-271.
		e-print: \url{https://arxiv.org/abs/math/0701714}
		
		\bibitem[{\color{blue}PY}]{PY} J.~H.~Przytycki, S.~Y.~Yang,
		The torsion of a finite quasigroup quandle is annihilated by its order. {\it J. Pure Appl. Algebra} 219 (2015), no. 10, 4782-4791.
		e-print: \url{https://arxiv.org/abs/1411.1727}
		
		\bibitem[{\color{blue}RS}]{RS} A.~B.~Romanowska, J.~D.~H.~Smith,
		Post-modern algebra. Pure and Applied Mathematics (New York). A Wiley-Interscience Publication. John Wiley and Sons, Inc., New York, 1999. xii+370 pp.
		
	\end{thebibliography}

\end{document}